\newtheorem {note}{Note} 
\newtheorem{state}{Statement}
\newtheorem{lemma}{Lemma}
\newtheorem{step}{Step}
\newtheorem {cor}{Corollary}
\begin{document}
\title{Definability lattice for addition of rationals}
\author[A.L. Semenov, S.F. Soprunov]{A.L. Semenov, S.F. Soprunov}
\begin{abstract} 
In the present paper we discuss the lattice of reducts of  $\langle \mathbb{Q}, \{$+$\}\rangle $.
\end{abstract}
\maketitle
\section{Preliminaries }

We consider the structure $\mathcal{M}=\langle \mathbb{Q}^{< \omega},\{+\} \rangle$, where $\mathbb{Q}^{< \omega} \subset \mathbb{Q}^\mathbb{N}, \vec v \in \mathbb{Q}^{<\omega}$ if $\{i | v_i \ne 0\}$ is finite. We denote by $\vec 0$ the vector $\langle 0,\dots,0,\dots \rangle \in \mathbb{Q}^{<\omega}$. It's well known that $\mathcal{M}$ is $\omega$-saturated elementary extension of $\langle \mathbb{Q}, \{+\}\rangle $, so the lattice of definable reducts of $\mathcal{M}$ (and $\langle \mathbb{Q}, \{+\}\rangle $) corresponds to the lattice of subgroups of the group of permutations $Sym(\mathcal{M})$, containing the group $GL(\mathcal{M})$ of invertible linear maps.

Our consideration consists from 3 parts.   

\textbf{Dyadic relations.} Here we discuss dyadic relations and 2-definable relations -- relations, definable by the signature $\{y=r x | r \in \mathbb{Q}\}$. These relations are almost trivial, but form rather complicated infinite lattice.

\textbf{Triadic relations.} Roughly speaking triadic relations add 2 new reducts: $z=(x+y)/2$ and  $z=\pm x \pm y$ (by $y=\pm x_1 \dots \pm x_n$ we denote the relation $\bigvee_{s \in \{-1,1\}^n}y=\sum_{i=1}^n s(i)x_i$). In particular we reprove, that the group $AGL(\mathcal{M})$ is maximal (\cite{kap}).
 
\textbf{Relations with more then 3 arguments.} We show that they add no new reducts.

If a relation $R(x_1,\dots,x_n)$ is definable in $\mathcal{M}$, tuples $a_1,\dots,a_n$ and $b_1,\dots,b_n$ are linearly independent, then  $R(\bar a) \equiv R(\bar b)$, so we will suppose, that $\lnot R(\bar a)$ for a linearly independent tuples $\bar a$. In other words we suppose that $\{\sum_{i=1}^n r_ix_i \ne \vec 0 | r_i \in \mathbb{Q}, r_i \ne 0$ for some $i\} \cup \{R(x_1,\dots,x_n)\}$ is inconsistent, so 
\begin{note}\label{note1}
$R(x_1,\dots,x_n) \to \bigvee_{j=1}^K \sum_{i=1}^n r_{j,i} x_i = \vec 0$ for some $K$.
\end{note}

From now on a definable relation is a relation, definable in the structure $\langle  \mathcal{M}, \{+\}\rangle $, definable by a signature $\Sigma$ means definable in the structure $\langle  \mathcal{M}, \Sigma\rangle $; independent tuple $\bar t$ is a linearly independent $\bar t=\langle t_1,\dots,t_n\rangle, t_i \in \mathcal{M}$; by $l(a,b), a,b \in \mathcal{M}$ we denote the straight line passing through $a$ and $b$.

We say, that a tuple $a_1,\dots,a_n$ is \emph{$m$-independent} for some natural  $m$ if $\sum_{i=1}^n (k_i/l_i)a_i = \vec 0, |k_i|<m, |l_i|<m$ implies all $k_i=0$.

Due to standard compactness arguments we note, that

\begin{note}\label{note2}
For any definable relation $R(\bar x)$ exists such natural number $m$ that $R(\bar x)$ holds for any independent $\bar x$ iff
$R(\bar x)$ holds for any $m$-independent $\bar x$
\end{note}

We use abbreviations $(\exists_{>k} x)Q(x)$ for $(\exists x_1,\dots,x_{k+1})(\bigwedge_{i \ne j} x_i \ne x_j \land \bigwedge_{i=1}^n Q(x_i))$, $(\exists_{<k} x)Q(x)$ for $(\exists x) Q(x) \land \lnot (\exists_{>k-1} x)Q(x)$, and $(\exists_{=k} x)Q(x)$ for $(\exists_{>k-1} x)Q(x) \land \lnot (\exists_{>k} x)Q(x)$. Sometimes we use the abbreviation $(\exists_{>k} x_1,\dots,x_n)$ which is defined by induction: $(\exists_{>k} x_1,\dots,x_n)Q(x_1,\dots,x_n) \rightleftharpoons (\exists_{>k} x_1)((\exists_{>k} x_2,\dots,x_n)Q(x_1,\dots,x_n))$.

\begin{note} \label{note2-1}
For a definable relation $R(\bar x,\bar y)$ there exists a natural number $K$ such that holds
$$\lnot(\exists_{>K} \bar y)R(\bar x,\bar y) \lor \lnot(\exists_{>K} \bar y)\lnot R(\bar x,\bar y)$$
\end{note}
\begin{proof}
This is shown by contradiction. Suppose that $(\exists x)((\exists_{>K} \bar y)R(\bar x,\bar y) \land (\exists_{>K} \bar y)\lnot R(\bar x,\bar y))$ holds for any $K$. Then the set $\{R(\bar a,\bar b),\lnot R(\bar a,\bar b')\} \cup \{\sum_i p_i a_i + \sum q_i b_i \ne \vec 0 | p_i, q_i \in \mathbb Q, q_i \ne 0 \text{ for some } i\} \cup \{\sum_i p_i a_i + \sum q_i b'_i \ne \vec 0 | p_i, q_i \in \mathbb Q, q_i \ne 0 \text{ for some } i\}$ is consistent, hence due to $\omega$-saturation of $\mathcal{M}$ there are $\bar a, \bar b, \bar b' \in \mathcal M$ such that $\bar b, \bar b'$ are independent, $\mathcal{V}(\bar a) \cap \mathcal{V}(\bar b) = \mathcal{V}(\bar a) \cap \mathcal{V}(\bar b') = \{\vec 0\}$, and $R(\bar a,\bar b), \lnot R(\bar a,\bar b')$. Contradiction, because there is $\sigma \in GL(\mathcal M)$ such that $\sigma(\bar a)= \bar a, \sigma(\bar b)= \bar b'$.
\end{proof}

So the note \ref{note2} can be reformulated as

\begin{cor} \label{note3}
For any definable relation $R(\bar x, \bar y)$ exists a natural number $K$ such that $R(\bar a, \bar b)$ holds for some (any) independent $\bar b$ such that $\mathcal{V}(\bar a) \cap \mathcal{V}(\bar b) = \{\vec 0\}$ iff
$(\exists_{>K} \bar y)R(\bar a,\bar y)$ holds.
\end{cor}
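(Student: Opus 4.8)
The plan is to take $K$ to be exactly the constant furnished by Note \ref{note2-1} for $R$, and to show that for this single $K$ (uniform in $\bar a$, since $\bar x$ is free in Note \ref{note2-1}) the displayed equivalence holds. Write $P(\bar a)$ for the property ``$R(\bar a,\bar b)$ holds for the independent tuples $\bar b$ with $\mathcal{V}(\bar a)\cap\mathcal{V}(\bar b)=\{\vec 0\}$''. First I would justify that the word ``some'' may here be replaced by ``any'', i.e. that $P(\bar a)$ is well defined: if $\bar b,\bar b'$ are two such tuples then, reasoning as at the end of the proof of Note \ref{note2-1}, there is $\sigma\in GL(\mathcal M)$ with $\sigma(\bar a)=\bar a$ and $\sigma(\bar b)=\bar b'$; since every element of $GL(\mathcal M)$ is an automorphism of $\mathcal M$ and $R$ is definable, $R(\bar a,\bar b)\equiv R(\bar a,\bar b')$. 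Hence either every such $\bar b$ satisfies $R(\bar a,\cdot)$ or every such $\bar b$ satisfies $\lnot R(\bar a,\cdot)$.

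Next I record the only genuinely geometric ingredient: for every $\bar a$ and every $k$ the \emph{generic} tuples $\bar b$ (independent, with $\mathcal{V}(\bar a)\cap\mathcal{V}(\bar b)=\{\vec 0\}$) are abundant in the nested sense, i.e. $(\exists_{>k}\bar y)\,G(\bar a,\bar y)$ holds, where $G(\bar a,\bar y)$ abbreviates the genericity condition. This is where I would use that $\mathcal M=\mathbb Q^{<\omega}$ is infinite-dimensional over $\mathbb Q$: building $\bar y$ coordinate by coordinate, each new $y_i$ must only avoid the finite-dimensional subspace spanned by $\bar a$ and the previously chosen $y_1,\dots,y_{i-1}$, and outside a finite-dimensional subspace there are infinitely many, hence more than $k$, admissible choices; this yields the nested count for any $k$, in particular for $k=K$.

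Finally I would combine these two facts with the dichotomy of Note \ref{note2-1}. If $P(\bar a)$ holds, then $G(\bar a,\bar y)\to R(\bar a,\bar y)$, so by monotonicity of $(\exists_{>K}\cdot)$ the abundance of generic tuples gives $(\exists_{>K}\bar y)R(\bar a,\bar y)$. Conversely, if $P(\bar a)$ fails, then by the first paragraph $G(\bar a,\bar y)\to\lnot R(\bar a,\bar y)$, and the same abundance gives $(\exists_{>K}\bar y)\lnot R(\bar a,\bar y)$; Note \ref{note2-1} then forces $\lnot(\exists_{>K}\bar y)R(\bar a,\bar y)$. Taking the contrapositive yields $(\exists_{>K}\bar y)R(\bar a,\bar y)\to P(\bar a)$, completing the equivalence. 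I expect the main obstacle to be the bookkeeping in the middle paragraph: one must verify that the nested quantifier $(\exists_{>K}\bar y)$ is genuinely witnessed by generic tuples, rather than merely that infinitely many generic tuples exist, and check that the same $K$ serves uniformly in $\bar a$ — both of which, however, reduce to the infinite-dimensionality of $\mathcal M$ and to the uniform choice of $K$ in Note \ref{note2-1}.
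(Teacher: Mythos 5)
Your proposal is correct and follows essentially the same route as the paper, which states this corollary without proof as an immediate reformulation of Note \ref{note2-1}: your choice of $K$ from Note \ref{note2-1}, the $GL(\mathcal M)$-transitivity argument (lifted from the end of that note's proof) to justify ``some $=$ any'', and the abundance of generic tuples via infinite-dimensionality are precisely the ingredients the paper leaves implicit. The only value you add is making explicit the monotonicity of the nested quantifier $(\exists_{>K}\bar y)$ and the uniformity of $K$ in $\bar a$, both of which you handle correctly.
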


Note, that there is only one nontrivial definable subset of $\mathcal{M}$, i.e. $\{\vec 0\}$.

\section{Dyadic relations}

\begin{state}\label{st0}
If $R(x_1,\dots,x_n)$ is nontrivial 2-definable relation, then $\{\vec 0\}$ is definable by $R$. 
\end{state}
\begin{proof}

We may suppose that $R(a_1,\dots,a_n) \Rightarrow a_i \ne a_j$.

The \emph{rank} of a relation $R(x_1,\dots,x_n)$ is a maximum number $m$, such that $R(a_1,\dots,a_n)$ holds for a tuple $\bar a$, containing $m$ independent items. Note, that we consider relations  $R(x_1,\dots,x_n)$ which rank is less than $n$.

Let $m$ be the rank of $R$. Then (renumbering variables if necessary) $R(a_1,\dots,a_m,b_1,\dots,b_{n-m})$ holds for the independent tuple $\bar a$ and some tuple $\bar b$. The relation $R$ is 2-definable and $m$ is the rank of $R$, so (1) each $b_j \in l(\vec 0, a_i)$ for some $i$ and (2) $\{\bar b' | R(\bar a, \bar b')\}$ is finite (Note \ref{note2-1}). We suppose, that $b_1 \in l(\vec 0, a_1)$ hence for some $k \ne 0$ and sufficiently large $M$ holds $(\exists_{=k} y_1)(\exists_{>M} x_2,\dots,x_m)(\exists y_2,\dots,y_{n-m})(y_1 \ne a_1 \land R(a_1,x_2,\dots,x_m,y_1,\dots, y_{n-m}))$. Denote by $Q(x,y)$ the statement $(\exists_{>M} x_2,\dots,x_m)(\exists y_2,\dots,y_{n-m}) R(x,x_2,\dots,x_m,y,\dots, y_{n-m})$. We see that $|\{c | Q(a_1,c), c \ne a_1\}| = k$. Therefore $|\{c | Q(d,c), c \ne d\}| = k$ for any $d \ne \vec 0$. But $\{c | P(\vec 0,  c), c \ne \vec 0\}$ is or empty or infinite for any definable $P$.  
\end{proof}

Let $R(x,y)$ is definable relation, we may suppose that $(\exists x)(\exists y)(R(x,y) \land x \ne y \land x \ne \vec 0 \land y \ne \vec 0)$ -- otherwise it's equivalent to a definable subset of $\mathcal{M}$.

According to the note \ref{note1} a $R(x,y) \land x \ne \vec 0$ is equivalent to $\bigvee_{i=1}^n y=r_i x$ for some $r_1,\dots,r_n,  r_i \ne 1,0$. 


Denote by  $G$ the multiplicative group generated by set $\{r_1,\dots,r_n\}$, and define the equivalence relation $\sim$ on $\mathcal{M}$ such, that $a \sim b \leftrightharpoons a=rb$ for some $r \in G$. A permutation $\varphi \colon \mathcal{M} \to \mathcal{M}$, preserving the relation $R$, is a composition of a permutation on $\mathcal{M} / \sim$ and bijections between corresponding classes of the equivalence. 

For each $a \in \mathcal{M}$ there is a corresponding permutation  $\sigma_a$ on the set $\{r_1,\dots,r_n\}$, such that $\varphi(a \cdot r_i)= \varphi(a) \cdot \sigma_a(r_i)$. These permutations $\sigma_a$ describe the corresponding bijections. 

If $a \nsim b$, then permutations $\sigma_a$ and $\sigma_b$ are independent. If $a \sim b$, then permutations $\sigma_a$ and $\sigma_b$ are nearly the same.


\begin{state}\label{st1}
If $a \sim b$, then  $|\sigma_a(r_i)|=|\sigma_b(r_i)|$. 
\end{state}
\begin{proof}
It's enough to show, that for any $a \in \mathcal{M}, r_i,r_j$ holds $|\sigma_a(r_j)| = |\sigma_{a \cdot r_i}(r_j)|$. We enumerate $r_1,\dots,r_n$ such, that $|\sigma_a(r_1)| \leqslant |\sigma_a(r_2)| \leqslant \dots,|\sigma_a(r_n)|$. 

By induction on $m \leqslant n$ we prove that
\begin{equation*}
|\sigma_{a \cdot r_i}(r_j)|=|\sigma_a (r_j)|; |\sigma_{a \cdot r_j}(r_i)|=|\sigma_a (r_i)| \mbox{ for all } i \leqslant m, j \leqslant n
\end{equation*}

For a current $m$ we need to prove that $|\sigma_{a \cdot r_m}(r_j)|=|\sigma_a (r_j)|$ and $|\sigma_{a \cdot r_j}(r_m)=\sigma_a (r_m)|$ for all $j \geqslant m$.

The proof is by induction on $j$. 

First we show, that $|\sigma_{a \cdot r_m}(r_j)|=|\sigma_a (r_j)|$. Suppose not, so $\sigma_{a \cdot r_m}(r_i)=\sigma_a (r_j)$ for some $r_i ,|\sigma_a(r_i)|\ne |\sigma_a(r_j)|$. Then $i > j$, because if $i < j$ then, by the induction hypothesis, $|\sigma_{a \cdot r_m}(r_i)|=|\sigma_a (r_i)|$. Then $\varphi (a) \cdot \sigma_a (r_m) \cdot \sigma_a (r_j) = \varphi (a \cdot r_m \cdot r_i)= \varphi (a) \cdot \sigma_a (r_i) \cdot \sigma_{a \cdot r_i} (r_m)$, i.e. $\sigma_a (r_m) \cdot \sigma_a (r_j)= \sigma_a (r_i) \cdot \sigma_{a \cdot r_i} (r_m)$. Because $|\sigma_a (r_i)| > |\sigma_a (r_j)|$, then $|\sigma_{a \cdot r_i} (r_m)| < |\sigma_a (r_m)|$, and $|\sigma_{a \cdot r_i} (r_m)| = |\sigma_a (r_k)|$ for some $k < m$. But according the induction hypothesis $|\sigma_{a \cdot r_i} (r_k)| = |\sigma_a (r_k)|$ holds for all $k < m, i <n$. Contradiction.

Show now that $|\sigma_{a \cdot r_j}(r_m)|=|\sigma_a (r_m)|$. Note that $\varphi (a \cdot r_m \cdot r_i)=\varphi (a) \cdot \sigma_a (r_m) \cdot \sigma_{a \cdot r_m} (r_j) = \varphi (a) \cdot \sigma_a (r_j) \cdot \sigma_{a \cdot r_j} (r_m)$. Because it was already shown that $|\sigma_{a \cdot r_m}(r_j)|=|\sigma_a (r_j)|$, we conclude, that $|\sigma_{a \cdot r_j}(r_m)=\sigma_a (r_m)|$.

End of induction on $j$.

End of induction on $m$.
\end{proof}
\begin{state}\label{st2}
If $|r_i|=|r_j|$ then $|\sigma_a (r_i)|=|\sigma_a (r_j)|$
\end{state}
\begin{proof}
$|\varphi (a \cdot r_i \cdot r_i)|=|\varphi (a)| \cdot |\sigma_a (r_i)| \cdot |\sigma_{a \cdot r_i} (r_i)|=|\varphi (a)| \cdot |\sigma_a (r_i)| \cdot |\sigma_a(r_i)|=|\varphi (a \cdot r_j \cdot r_j)|$
\end{proof}

\subsection{Dyadic relations summary.} 

Automorphism groups of definable dyadic relations closely connected with automorphism groups of finitely generated abelian groups (e.g. \cite{kos}).

First of all we describe a group $G_R$ of automorphisms for a relation $R(x,y) \equiv \bigvee_{i=0}^{n-1} x=r_i y$ where $|r_i| \ne |r_j|$. Let $G$ be the multiplicative group generated by set $\{r_1,\dots,r_n\}$. We say that permutation $\sigma$ on the set $\{r_1,\dots,r_n\}$ is \emph{correct} if the mapping $\psi(r_1^{k_1} \dots r_n^{k_n}) = \sigma(r_1)^{k_1} \dots \sigma(r_n)^{k_n}$ is an automorphism of $G$, in other words if $\sigma$ preserves all  multiplicative dependences between $\{r_1,\dots,r_n\}$.

A permutation $\varphi$, preserving the relation  $R$ on the structure $\mathcal{M}$ is the composition of a permutation on $\mathcal{M} / \sim$ and the permutations $\varphi_f$ of group $G$ for each bijection $f$ between corresponding classes of the equivalence. Each permutation $\varphi_f$ corresponds to an automorphism generated by a correct permutation on the set $\{r_1,\dots,r_n\}$.

A relation $R'$ is definable by a relation $R$ if $G_R \subset G_R'$, i.e. if all $r'_i$ belongs to the group, generated by the set $\{r_1,\dots,r_n\}$ and each correct permutation on $\{r_1,\dots,r_n\}$ generates (correct) permutation on  $\{r'_1,\dots,r'_{n'}\}$. 

A group $G_R$ is a bit more complicated when $|r_i|=|r_j|$ for some $i,j$. Denote by $P=\{s_1,\dots,s_k\}$ subset of such numbers from $\{r_1,\dots,r_n\}$ that $-s_i \not \in \{r_1,\dots,r_k\}$ and by $G'$ the multiplicative group generated by $P$. Let $\phi$ be a mapping of cosets of $G'$ in $G$ to $\{-1,1\}$. Then a permutation $\varphi_f$ of group $G$, corresponding to a bijection between corresponding classes of the equivalence is $\phi(x)\sigma(x)$ where $\sigma$ is an automorphism, generate by a correct permutation on the set $\{r_1,\dots,r_n\}$ and $\phi$.

We call a relation $R$ \emph{2-(un)definable} if it's (un)definable by dyadic relations, i.e. is (un)definable by the signature $\{y= r x | r \in \mathbb{Q}\}$.

By $LGL(\mathcal{M})$ we denote the group of permutations of $\mathcal{M}$, preserving all 2-definable relations. We note that $LGL(\mathcal{M}) = \langle GL(\mathcal{M}), Sym_l \rangle$, where $Sym_l$ is the group of permutations on the set of straight lines, passing through $\vec 0$.




\section{Triadic relations.}

According the note \ref{note1} holds $R(x,y,z) \to  a_1 x +  b_1 y + c_1 z = \vec 0 \lor \dots \lor a_n x +  b_n y + c_n z = \vec 0$. So there are expressions $a_1 x +  b_1 y + c_1 z = \vec 0,\dots,a_k x +  b_k y + c_k z = \vec 0$ such, that $R(x,y,z) \equiv a_1 x +  b_1 y + c_1 z = \vec 0 \lor \dots \lor a_k x +  b_k y + c_k z = \vec 0$ for any linearly independent $x,y$. We can suppose, due to linearly independence of $x,y$, that $c_i \ne 0$, so we can rewrite the equations in the form $R(x,y,z) \equiv z=p_1 x +  q_1 y \lor \dots \lor z=p_k x +  q_k y$. The list of expressions  $z=p_1 x +  q_1 y,\dots, z=p_k x +  q_k y$ or simply the list of pairs $\langle p_1,q_1 \rangle,\dots,\langle p_k,q_k \rangle$ we will call the \emph{table} of relation $R$.

First of all we simplify the relation $R$.

Consider the relation $R'(x,y,z) \rightleftharpoons R(x,y,z) \land \lnot (\exists_{>K} y')R(x,y',z)$  for sufficiently large natural number $K$ (Note \ref{note2-1}). It's easy to see, that (i) the table of $R'$ is a subset of the table of $R$, and (ii) $q_i \ne 0$ for all $q_i$ from the table of $R'$. So we can remove from the table of $R$ all expressions where $p_i=0$ or $q_i=0$. If we removed all expressions from the table of $R$ it means that $R$ is 2-definable, so from now on we suppose that $p_i \ne 0, q_i \ne 0$ for all expressions in the table. The process of removing lines where $p_i=0$ or $q_i=0$ we'll call \emph{normalization}, the result of normalization is \emph{normal form} of relation.

Now we can suppose that for all independent $x,y$ holds $|\{z | R(x,y,z\}|<k$, so for dependent $x,y$ holds $R(x,y,z) \to z \in l(x,y)$, where $l(x,y)$ is the line  passing through $x,y$. Otherwise we consider the relation $R'(x,y,z) \rightleftharpoons R(x,y,z) \land (\exists_{<k+1}z')R(x,y,z')$.

We call the relation $R(x,y,z)$ \emph{affine} if $p_i+q_i=1$ for all pairs from the table of $R$. In other word $R$ is affine if for any independent $x,y$ holds $R(x,y,z) \Rightarrow  z \in l(x,y)$.

The group  $AGL(\mathcal{M})$ is the group of affine permutations: it contains, beside $GL(\mathcal{M})$, permutations $\sigma(x)=x+v$ for all $v \in \mathcal{M}$.

We are going to prove:

\begin{lemma}\label{main-1}
If $R$ is affine, and permutation $\sigma$ preserves $R$ then $\sigma \in AGL(\mathcal{M})$.
\end{lemma}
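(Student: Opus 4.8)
The plan is to show that any permutation $\sigma$ preserving an affine $R$ is a collinearity-preserving bijection of the affine $\mathbb{Q}$-space $\mathcal{M}$, and then to invoke the fundamental theorem of affine geometry together with the fact that $\mathbb{Q}$ is a prime field. The first thing I record is the form of $R$ after the two normalization steps of this section: for affine $R$ we have $p_i+q_i=1$ for every pair of the table, and $R(x,y,z)$ implies $z\in l(x,y)$ for \emph{every} pair $x,y$, independent or not. Hence $R$-relatedness unconditionally implies collinearity, and since $\sigma$ preserves $R$, whenever $R(x,y,z)$ holds the images $\sigma(x),\sigma(y),\sigma(z)$ are collinear. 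Crucially, no assumption that $\sigma$ preserves linear independence is needed for this.

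Second, I fix independent $x,y$ and set $\ell=l(\sigma(x),\sigma(y))$, a genuine line since $\sigma$ is injective and $x\neq y$. The table points $z_i=p_ix+(1-p_i)y$ satisfy $R(x,y,z_i)$, so each $\sigma(z_i)$ lies on $\ell$. Applying $R$ to pairs drawn from $\{x,y,z_1,\dots,z_k\}$ and iterating, the images of the whole $R$-closure of $\{x,y\}$ — the set $\{ax+(1-a)y:a\in C\}$, where $C\subseteq\mathbb{Q}$ is the least set containing $0,1$ and closed under $(a,b)\mapsto p_ia+(1-p_i)b$ — all land on $\ell$. I would then promote this from the sublattice $C$ to all of $\mathbb{Q}$ by an incidence construction inside a plane: for a rational point $w=ax+(1-a)y$ with $a\notin C$, I choose $t$ independent from $x,y$ and realize $w$ as an $R$-reachable point of auxiliary lines through $t$ and points already controlled, forcing $\sigma(w)\in\ell$. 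This gives $\sigma(l(x,y))\subseteq l(\sigma(x),\sigma(y))$; applying the same argument to $\sigma^{-1}$, which also preserves $R$, yields the reverse inclusion, so $\sigma$ carries every line onto a line.

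Finally, $\mathcal{M}=\mathbb{Q}^{<\omega}$ is an affine space over $\mathbb{Q}$ of infinite, hence $\geq 2$, dimension, and $\sigma$ is a bijection preserving collinearity in both directions. By the fundamental theorem of affine geometry $\sigma$ is semi-affine, i.e. $\sigma(x)=L(x)+v$ for some $v\in\mathcal{M}$ and an additive $L$ semilinear over an automorphism of $\mathbb{Q}$; as $\mathbb{Q}$ is the prime field its only automorphism is the identity, so $L$ is $\mathbb{Q}$-linear and $\sigma$ is an affine permutation, that is, $\sigma\in AGL(\mathcal{M})$.

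I expect the middle step to be the main obstacle: passing from the sublattice $\{ax+(1-a)y:a\in C\}$, which $R$ reaches directly, to the whole rational line. A single ratio $p\neq 0,1$ already generates a nontrivial $C$, and because $\mathbb{Q}$ is the prime field the affine incidence constructions recover every rational division ratio from it; the delicate points are keeping the auxiliary points in sufficiently general position so that each intermediate use of $R$ is legitimate, and bookkeeping the possibly several ratios $p_i$ together with the fact that the table entry matching an image triple need not be the one matching the original triple. The counting dichotomy of Corollary \ref{note3} can be used to make these finitary choices uniformly.
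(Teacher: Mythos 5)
Your overall frame coincides with the paper's: after normalization an affine $R$ gives collinearity of images unconditionally, and the endgame is the fundamental theorem of affine geometry over the prime field $\mathbb{Q}$. Your step 2 (images of the closure points $\{ax+(1-a)y : a\in C\}$ land on $\ell$) is also correct. But the step you yourself call the main obstacle is a genuine gap, and the mechanism you propose for it provably cannot close it. Every point obtainable from a finite starting set by iterating table moves $(u,v)\mapsto p_iu+q_iv$ has, with respect to any affinely independent family containing the starting points, affine coordinates in the subring $\mathbb{Z}[p_1,\dots,p_k]\subset\mathbb{Q}$. Take $R$ to be the midpoint relation $z=(x+y)/2$, which is affine and already in normal form: that subring is $\mathbb{Z}[1/2]$, so the point $w=\tfrac13x+\tfrac23y$ is never $R$-reachable from $x,y$ together with any auxiliary points $t_1,\dots,t_r$ in general position, since by uniqueness of affine coordinates its $x$-coefficient $1/3$ would have to be dyadic. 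Hence "realize $w$ as an $R$-reachable point of auxiliary lines through $t$ and points already controlled" fails exactly in the model case, and no choice of auxiliary configuration repairs it. A second omission: your argument never touches lines through $\vec 0$. On such a line every pair of points is linearly dependent, so your step 2 (which needs independent $x,y$ to read off the table) never applies there, yet the fundamental theorem requires all collinear triples; the paper devotes a separate two-case incidence argument to $\sigma(\vec 0)$ and the lines through it.

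What the paper does instead — and this is the idea your sketch is missing — is to stop trying to \emph{reach} the third point and instead make it a member of a rigid, existentially quantified configuration. First (Lemma \ref{main-1-1}) it defines from $R$ a $1/2$-correct relation $S$, with $\{z \mid S(x,y,z)\}\subset l(x,y)$ and $S(x,y,(x+y)/2)$ for independent $x,y$, via the moves $r\mapsto 1/r$ (swap arguments) and $r\mapsto r/(1-r)$ (compose through an auxiliary point). By compactness (Note \ref{note2}) the midpoint property then holds for all $n$-independent pairs for some fixed $n$; this matters because $n$-independence, unlike linear independence, only forbids dependencies with small coefficients and so is achievable on \emph{every} line, including lines through $\vec 0$. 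Then, given collinear nonzero $a,b,c$, one chooses a small step $\Delta$ with $b=a+k\Delta$, $c=a+l\Delta$ and all pairs $(a+i\Delta,a+(i+2)\Delta)$ $n$-independent, and observes that the formula $(\exists x_0,\dots,x_k)(x_0=a\wedge x_k=b\wedge x_l=c\wedge\bigwedge_i S(x_i,x_{i+2},x_{i+1}))$ is preserved by $\sigma$; any witnessing chain for the image triple is forced, by condition (i) on $S$ and injectivity of $\sigma$, onto a single line, so $\sigma(a),\sigma(b),\sigma(c)$ are collinear even though none of the chain points is reachable from $\{a,b,c\}$. Also, the tool you invoke at the end, Corollary \ref{note3}, is not the relevant one; the work is done by Note \ref{note2}.
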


\begin{cor}\label{seq1}
If $\vec 0$ is definable by an affine relation $R$, then $R$ is equivalent (as reduct) to $z=x+y$.

If $\vec 0$ is not definable by an affine relation $R$, then $R$ is equivalent (as reduct) to  $z=(x+y)/2$.
\end{cor}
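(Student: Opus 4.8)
The plan is to route everything through the Galois correspondence recorded in the preliminaries: two relations are equivalent as reducts precisely when they determine the same subgroup of $Sym(\mathcal M)$, so it suffices to identify the group $G_R$ of permutations preserving $R$. First I would pin down the two target groups. A bijection $\sigma$ preserving $z=x+y$ satisfies $\sigma(x)+\sigma(y)=\sigma(x+y)$ for all $x,y$; additivity over the $\mathbb Q$-vector space forces $\mathbb Q$-linearity, so the group preserving $z=x+y$ is exactly $GL(\mathcal M)$. For $z=(x+y)/2$, every affine map clearly preserves the midpoint relation, so $AGL(\mathcal M)$ is contained in its automorphism group; since this relation is affine, Lemma \ref{main-1} gives the reverse inclusion, whence the group preserving $z=(x+y)/2$ is exactly $AGL(\mathcal M)$.

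Next I would locate $G_R$ between these two groups. As $R$ is definable, every element of $GL(\mathcal M)$ preserves it, so $GL(\mathcal M)\subseteq G_R$; as $R$ is affine, Lemma \ref{main-1} yields $G_R\subseteq AGL(\mathcal M)$. The central step is then that there is no group strictly between $GL(\mathcal M)$ and $AGL(\mathcal M)$. Writing $AGL(\mathcal M)=GL(\mathcal M)\ltimes T$ with $T$ the group of translations $t_v\colon x\mapsto x+v$, any intermediate $H$ determines the subgroup $W=\{v\mid t_v\in H\}$ of $\mathcal M$, which is $GL(\mathcal M)$-invariant because $A\,t_v\,A^{-1}=t_{Av}$ for $A\in GL(\mathcal M)$. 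Applying the scalar maps $q\cdot\mathrm{id}$ shows $W$ is a $\mathbb Q$-subspace, and since $GL(\mathcal M)$ is transitive on $\mathcal M\setminus\{\vec 0\}$ the only invariant subspaces are $\{\vec 0\}$ and $\mathcal M$; these force $H=GL(\mathcal M)$ and $H=AGL(\mathcal M)$ respectively. Hence $G_R\in\{GL(\mathcal M),AGL(\mathcal M)\}$.

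Finally I would split into the two cases according to whether $\vec 0$ is definable by $R$, which by the correspondence means whether $G_R$ fixes $\vec 0$. Since $GL(\mathcal M)$ fixes $\vec 0$ while every nontrivial translation moves it, $G_R$ fixes $\vec 0$ exactly when $G_R=GL(\mathcal M)$. Thus if $\vec 0$ is definable by $R$ then $G_R=GL(\mathcal M)$, the group of $z=x+y$, so $R$ is equivalent as a reduct to $z=x+y$; and if $\vec 0$ is not definable by $R$ then $G_R=AGL(\mathcal M)$, the group of $z=(x+y)/2$, so $R$ is equivalent to $z=(x+y)/2$. The only substantive work beyond Lemma \ref{main-1} is the maximality step of the middle paragraph, which I expect to be the main point; however it reduces to the standard fact that the natural $GL(\mathcal M)$-module $\mathcal M$ is irreducible, immediate from transitivity on nonzero vectors.
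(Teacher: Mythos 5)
Your proof is correct and takes essentially the same route as the paper: Lemma \ref{main-1} sandwiches the automorphism group $G_R$ between $GL(\mathcal{M})$ and $AGL(\mathcal{M})$, and the case split on whether $G_R$ fixes $\vec 0$ (equivalently, whether $\vec 0$ is definable by $R$) yields the two alternatives. Your extra material---explicitly computing the automorphism groups of $z=x+y$ and $z=(x+y)/2$, and classifying intermediate subgroups via $GL(\mathcal{M})$-invariant groups of translations---just spells out what the paper leaves implicit, since its step ``contains a shift, hence coincides with $AGL(\mathcal{M})$'' rests on the same conjugation-plus-transitivity argument.
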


\begin{lemma}\label{main-2}
$\vec 0$ is definable by any nonaffine relation $R$.
\end{lemma}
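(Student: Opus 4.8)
The plan is to reduce the nonaffine triadic case to the dyadic case already settled in Statement \ref{st0}. Concretely, I would pass to the ``diagonal'' of $R$, namely the binary relation
$$D(x,z) \rightleftharpoons R(x,x,z),$$
which is definable from $R$ alone, and then argue that $D$ is a nontrivial $2$-definable relation; Statement \ref{st0} would immediately yield that $\{\vec 0\}$ is definable by $D$, hence by $R$.

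First I would check that $D$ is $2$-definable, i.e. that $D(x,z)\to z\in l(\vec 0,x)$. This part is clean and needs no information about the table. After the normalization the fibre $\{z\mid R(x,x,z)\}$ is finite (bounded by $k$), and it is invariant under the stabilizer of $x$ in $GL(\mathcal M)$; since every orbit of that stabilizer off the line $\mathbb Q x$ is infinite while the points of $\mathbb Q x$ are fixed, a finite invariant fibre must lie inside $\mathbb Q x$. Hence $D(x,z)\equiv\bigvee_i z=c_i x$ for fixed rationals $c_i$ (well defined by $GL$-invariance), so $D$ is indeed $2$-definable.

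The heart of the matter is to show that $D$ is \emph{nontrivial}, i.e. that some $c_i\ne 1$, so that $D$ is not equivalent to $z=x$. The natural claim is that the diagonal ratios are exactly the sums $c_i=p_i+q_i$, so that nonaffineness ($p_{i_0}+q_{i_0}\ne 1$ for some $i_0$) produces a genuine scaling $z=c_{i_0}x$ with $c_{i_0}\ne 1$ (and if some $p_i+q_i=0$ one even gets the disjunct $z=\vec 0$ outright). To justify reading off $c_i=p_i+q_i$ I would argue that $R$ is reduct-equivalent to the ``pure'' relation $\bigvee_i z=p_ix+q_iy$, using that the generic (independent-tuple) behaviour controls the reduct via Corollary \ref{note3}; on that representative the substitution $y:=x$ gives $c_i=p_i+q_i$ directly.

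I expect this last transfer to be the main obstacle. Because the ambient model is a plain $\mathbb Q$-vector space there are no infinitesimals, so one cannot simply ``let $y\to x$'': the value of $R$ on the degenerate diagonal is a priori independent of the table, which only constrains $R$ on linearly independent tuples. The real work is thus to show that replacing $R$ by its pure-formula version does not change the associated permutation group, so that the diagonal ratios may legitimately be taken to be $p_i+q_i$. As a fallback that sidesteps this point I would argue group-theoretically: the orbit of $\vec 0$ under the group $G_R$ of $R$-preserving permutations is $GL$-invariant, hence equals either $\{\vec 0\}$ or all of $\mathcal M$, and I would aim to show that in the transitive case $G_R$ must contain the translations, forcing $R$ to be translation-invariant and therefore affine, contrary to hypothesis. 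Either route reduces the lemma to the same essential point, namely that the nonaffineness of $R$ is genuinely visible after collapsing two of its arguments.
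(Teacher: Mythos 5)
There is a genuine gap, and you have located it yourself: everything hinges on the claim that the diagonal ratios can be taken to be $c_i=p_i+q_i$, i.e.\ that $R$ may be replaced by its ``pure'' version $\bigvee_i z=p_ix+q_iy$ without changing the generated reduct. That claim is never proved, and it cannot be dismissed as a routine transfer. The table of $R$ constrains $R$ only on linearly independent pairs $(x,y)$, while the diagonal $D(x,z)=R(x,x,z)$ sees only dependent pairs; Corollary \ref{note3} cannot help, since it speaks exclusively about generic (independent) behaviour. Concretely, $R(x,y,z)\rightleftharpoons (z=px+qy \land x\ne y)\lor(x=y\land z=x)$ is nonaffine when $p+q\ne 1$, yet its diagonal is the trivial relation $z=x$, and $R'(x,y,z)\rightleftharpoons z=px+qy\land x\ne y$ has empty diagonal; in both cases your relation $D$ yields nothing, and repairing the argument would require defining the pure diagonal $z=(p+q)x$ from $R$ --- which already gives $\vec 0$ and is therefore as hard as the lemma itself. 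Note also that the machinery the paper develops precisely for controlling behaviour on dependent tuples (the ``simple'' relations of Lemma \ref{prim}) comes \emph{after} Lemma \ref{main-2} and treats $\vec 0$ as a signature symbol, so appealing to anything of that kind here would be circular. Your group-theoretic fallback has the same status: the step ``if $G_R$ is transitive then $G_R$ contains the translations'' is asserted with no argument, and in this setting it is exactly the sort of classification statement the paper is in the business of proving (compare Lemma \ref{main-1}, which needs the fundamental theorem of affine geometry merely to handle the affine case).

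For contrast, the paper's proof never touches dependent tuples. It composes $R$ with itself, forming $R_1(x,y,z)\rightleftharpoons(\exists z')(R(x,y,z')\land R(y,z',z))$ and $R_2(x,y,z)\rightleftharpoons(\exists z')(R(y,x,z')\land R(z',y,z))$, whose tables contain the lines $pq\,x+(p+q^2)y$ and $pq\,x+(q+p^2)y$, and it uses $\exists_{>M}$ quantification so that only independent-tuple behaviour ever matters. Nonaffineness enters through the ratio $(p^2+q)/(q^2+p)\ne 1$ --- this is where the preparatory Lemma \ref{add-1}, securing $p\ne q$, $p^2+q\ne 0$, $p+q^2\ne 0$, is indispensable, conditions your argument never engages. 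The upshot is that for every $b\ne\vec 0$ there is a definable, nonempty, uniformly bounded set of companions $c\ne b$ on the line through $\vec 0$ and $b$, whereas for $b=\vec 0$ such a companion set would be $GL(\mathcal M)$-invariant and disjoint from $\{\vec 0\}$, hence infinite; so the companion-set formula distinguishes $\vec 0$, contradicting its assumed indefinability.
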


\begin{state}\label{n_aff}
Nonaffine relation $R$ is
 
(i) equivalent to  $z=x+y$ 

or 

(ii) is equivalent to $\{z=\pm x \pm y, R^*\}$ for some 2-definable relation $R^*$.
\end{state}





\begin{proof} \textbf{lemma \ref{main-1}}.

First we prove

\begin{lemma}\label{main-1-1}
By $R$ can be defined  a relation $S(x,y,z)$, such that 

(i) $\{z | S(x,y,z)\} \subset l(x,y)$ and

(ii) for independent $x,y$ holds $S(x,y, (x+y)/2)$.
\end{lemma}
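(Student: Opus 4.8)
The plan is to put ratio coordinates on the line $l(x,y)$ and reduce the statement to one short arithmetic fact about the ratios occurring in the table of $R$. Since $R$ is affine and normalized, every pair $\langle p_i,q_i\rangle$ in its table has $p_i+q_i=1$ and $p_i,q_i\neq 0$, so for independent $x,y$ each $z$ with $R(x,y,z)$ has the form $z=(1-q_i)x+q_iy$, the point of $l(x,y)$ at ``ratio'' $q_i$, where $x,y$ sit at ratios $0,1$. By the normalization step above we also have $R(x,y,z)\to z\in l(x,y)$ for dependent $x,y$, so any relation built from $R$ by conjunctions and existential quantification over points keeps all of its points on $l(x,y)$; condition (i) will thus hold automatically for the $S$ we construct, and the whole task becomes forcing the ratio $1/2$. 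Write $D=\{q_1,\dots,q_k\}\subseteq\mathbb Q\setminus\{0,1\}$; if $1/2\in D$ we finish with $S:=R$, so assume $1/2\notin D$.

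Next I would study the set $\mathcal C$ of \emph{constructible} ratios $t$, meaning those $t$ for which some formula $\exists w_1\cdots\exists w_m\,\Phi$, with $\Phi$ a conjunction of instances of $R$ on variables among $x,y,w_1,\dots,w_m$ and one output variable, forces a point at ratio $t$ for independent $x,y$. Clearly $0,1\in\mathcal C$. Applying $R$ to two already constructed points as frame gives, for each $q\in D$, the operation $(\alpha,\beta)\mapsto(1-q)\alpha+q\beta$; and solving an instance of $R$ for one of its frame arguments gives the inverse operations, since for independent $x,y$ the solving point is determined and can be produced by an existential quantifier. From closure under $\beta\mapsto q\beta$ and $\beta\mapsto\beta/q$ one gets closure under multiplication by the group $\Lambda=\langle q,1-q\mid q\in D\rangle\leq\mathbb Q^{*}$; and composing the scaling by $q$ centred at $y$ with the inverse of the scaling by $q$ centred at $x$ produces the translation $\beta\mapsto\beta+(1-q)$, which, conjugated by the scalings, yields translation by every element of $\Lambda$. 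Hence $\mathcal C$ contains the additive group generated by $\Lambda$, i.e. the subring $\mathbb Z[\Lambda]\subseteq\mathbb Q$.

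The arithmetic crux is then that $1/2\in\mathbb Z[\Lambda]$, i.e. that $2$ is invertible there. Fix any $q=a/b\in D$ in lowest terms, so $1-q=(b-a)/b$. As $\gcd(a,b)=1$, the integers $a,b$ are not both even, hence at least one of $a$, $b$, $b-a$ is even; therefore $2$ divides a numerator or denominator of $q$ or of $1-q$, so $2$ is a unit of $\mathbb Z[\Lambda]$ and $1/2\in\mathbb Z[\Lambda]\subseteq\mathcal C$. Finally I would unwind the witness that $1/2$ is constructible into an explicit $S(x,y,z):=\exists\bar w\,\Phi(x,y,z,\bar w)$: for independent $x,y$ the intended intermediate points satisfy $\Phi$ together with $z=(x+y)/2$, giving (ii), while every conjunct of $\Phi$ keeps its points on $l(x,y)$, giving (i).

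I expect the only genuine content to be the parity observation above (which is short) together with the care needed to justify the ``inverse'' operations as honest definable constructions: because the table of $R$ may carry several ratios, an existential quantifier solving $R$ for a frame argument can return several points, so the hard part will be arguing that the intended solving point is always among them and that the spurious ones remain on $l(x,y)$, so that they harm neither (i) nor (ii); the normalized implication $R(x,y,z)\to z\in l(x,y)$ is exactly what makes this bookkeeping go through.
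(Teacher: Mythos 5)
Your overall skeleton is the same as the paper's: coordinatize $l(x,y)$ by ratios, call a ratio constructible when some existential formula over $R$ both realizes it and keeps all solutions on the line (the paper's name for this is an ``$r$-correct'' relation), and show $1/2$ is constructible. Your arithmetic endgame, though, is genuinely different and in one respect better: the paper simply asserts that its two operations $r\mapsto 1/r$ and $r\mapsto r/(1-r)$ reach $1/2$ from any $q$, whereas you isolate the real number-theoretic content ($1/2\in\mathbb Z[\Lambda]$ by the parity of $a$, $b$, $b-a$), and your argument is insensitive to the sign of $q$. This matters: both of the paper's operations send negative ratios to negative ratios, so for a table such as $z=2x-y$ (ratio $-1$) they alone can never produce $1/2$; one also needs, e.g., the swap $S(y,x,z)$, giving $r\mapsto 1-r$ --- an operation your closure effectively contains.

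However, there is a genuine gap exactly at the point you flag and then wave away: the claim that any relation built from $R$ by conjunction and existential quantification keeps its solutions on $l(x,y)$, ``so (i) holds automatically,'' and in particular that the inverse (solving) operations are legitimate. This is false as stated. Normalization only guarantees that for linearly \emph{dependent} arguments $u,v$ the solutions of $R(u,v,\cdot)$ lie on the line through $\vec 0$ and $u$ --- not on $l(x,y)$. Concretely, in a solving instance $\exists u\,\exists w\,(S_\alpha(x,y,u)\wedge S_\gamma(x,y,w)\wedge R(u,z,w))$ nothing prevents the witnesses from degenerating to $u=w$ (the finite ratio sets of $S_\alpha$ and $S_\gamma$ are not under your control and may overlap), and then every $z$ with $R(u,z,u)$ is a solution; the table says nothing about such dependent-pair triples, so $R$ may well contain, say, all triples $(u,3u,u)$, producing solutions on $l(\vec 0,u)$, off $l(x,y)$, and killing (i). The same degeneration threatens even your forward step $\exists u\,\exists v\,(S_\alpha(x,y,u)\wedge S_\beta(x,y,v)\wedge R(u,v,z))$ when $S_\alpha$ and $S_\beta$ share a ratio. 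The idea you are missing is how the paper's two constructions dodge this: in $S(x,z,y)$ and in $(\exists v)(S(x,v,z)\wedge S(v,x,y))$, every quantified point and the output point occur in some instance whose output slot is the \emph{free} variable $y$ with the free variable $x$ as a frame, so any degenerate (dependent) instantiation would force $y$ into the span of $x$, contradicting independence of $x,y$. To repair your proof you must implement each scaling, translation and inverse so that every witness is pinned to $l(x,y)$ by an instance of that form --- which essentially amounts to rebuilding the paper's operations; the parity argument, while correct and illuminating, does not substitute for this.
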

\begin{proof}\textbf{lemma \ref{main-1-1}}

We say that a relation $S(x,y,z)$ is \emph{r-correct}, if for any independent $x,y$ condition (i) holds and $S(a,b, a+r(b-a))$. The relation  $R$ is $q$-correct for some rational $q, q \ne 0, q \ne 1$. Let us show, that by $r$-correct relation $S(x,y,z)$ can be defined $1/r$ and $r/(1-r)$ correct relations. First, it's easy to see that $S(x,z,y)$ is $1/r$-correct.

Second, note that the relation $(\exists v) (S(x,v,z) \land S(v,x,y))$ is  $r/(1-r)$-correct. Condition (i) follows from the condition (i) for $S$. We need to show, that  if $a,b$ are independent, then $(\exists v) (S(a,v,c) \land S(v,a,b))$ where $c=a+(r/(1-r))(b-a)$. Let  $v=b+(c-a)$. It's easy to check, that $S(a,v,c) \land S(v,a,b)$. 

Using operations $r \to 1/r, r \to r/(1-r)$ we can from any $q$-correct relation build a $1/2$-correct relation. \textbf{End of lemma \ref{main-1-1} proof.} 
\end{proof}

According to \emph{the fundamental theorem of affine geometry}(\cite{sch}) a permutation $\sigma$ of $\mathcal{M}$ belongs to $AGL(\mathcal{M})$ iff $\sigma$ takes any 3 collinear points to 3 collinear points.

So we are going to show,that if $\sigma$ preserves the relation $R$, then it takes any 3 collinear points to 3 collinear points. We start with nonzero points. Suppose, that  $a,b,c$ -- 3 collinear points, $a,b,c \ne \vec 0, c=a+r(b-a), 0<r<1$. Let $S(x,y,z)$ be definable by $R$ $1/2$-correct relation. Because $S(a,b,(a+b)/2)$ holds for any independent $a,b$, it must holds for a $n$-independent $a,b$  when $n$ is sufficiently large. Choose such small vector $\Delta$, that  $b=a+k\Delta, c=a+l\Delta$ for some integers $k,l,k>l$ and pairs $a+i\Delta, a+(i+2)\Delta$ are $n$-independent for $i<k$. Then holds 
\begin{equation}
(\exists x_0,\dots, x_k)(x_0=a \land x_k=b \land x_l=c \land \bigwedge_{i=1}^{k-2} S(x_i, x_{i+2}, x_{i+1}))
\end{equation}
 -- we can set $x_i=a+i\Delta$. Permutation $\sigma$ has to preserve $S$ as well as equation (1). From the condition (i) follows, that $\sigma(x_i)$ has to lie on the same line.
 
So the permutation $\sigma$ takes any 3 collinear nonzero points to 3 collinear points. Show now that the point $\vec 0$ keeps collinearity as well. 
 
To the contrary. For any line $l, \vec 0 \in l$ by the $\sigma'(l)$ we denote the line, containing all points from $\sigma(l \setminus \{\vec 0\})$. Suppose that  $\vec 0$ lies on line $l$ but $\sigma(l)$ does not contain the point $\sigma(\vec 0)$. 

Consider 2 cases. (i) $\sigma(\vec 0) \ne \vec 0$. Take some nonzero point $a$ on the line $l$, consider a line $l'$,  passing through $\sigma(\vec 0), \sigma(a)$. Inverse images of all nonzero points of line $l'$ lie on the line $l$ and inverse images of all nonzero points of line $\sigma'(l)$ lie on the line $l$. Contradiction.

Case (ii). $\sigma(\vec 0) = \vec 0$. Choose another line $l'$ passing through $\vec 0$. Lines $\sigma'(l)$ and $\sigma'(l')$ don't intersect. Take an arbitrary line $l''$, parallel to $l$. Lines $\sigma'(l)$ и $\sigma'(l'')$ are parallel, which contradict the intersection of $\sigma'(l')$ and $\sigma'(l'')$.

\textbf{End of lemma \ref{main-1} proof.}
\end{proof}

\begin{proof}\textbf{Proof of corollary \ref{seq1}}.

Let $R$ be an affine relation. The group of permutation, preserving $R$ is a subgroup of $AGL(\mathcal{M})$. If $\vec 0$ is definable by $R$, then this subgroup preserves $\vec 0$, so it coincides with $GL(\mathcal{M})$. If $\vec 0$ is not definable by $R$, then it contains a shift $x \to x+v$ for nonzero $v$. In this case it coincides with $AGL(\mathcal{M})$.
\end{proof}

To prove the lemma \ref{main-2} we need that $p,q$ satisfy conditions: $p^2+q \ne 0; p+q^2 \ne 0; p \ne q$.

So it may be necessary to transform the relation $R$.
\begin{lemma} \label{add-1}
For any nonaffine relation $R$ there is a relation $R'(x,y,z)$, definable by $R$ which table contains a line $z=p x+q y$, where  $p,q \ne 0; p^2+q \ne 0; p+q^2 \ne 0; p \ne q, p+q \ne 1$.
\end{lemma}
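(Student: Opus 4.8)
To prove Lemma~\ref{add-1} the plan is to work entirely with tables and to manufacture a good line by repeatedly composing $R$ with itself. Recall that after normalization the table of $R$ consists of pairs $\langle p_i,q_i\rangle$ with $p_i,q_i\neq 0$, and that $R$ being nonaffine means some entry satisfies $p+q\neq 1$. The first observation is that a line $z=px+qy$ is affine exactly when the coefficient vector $(p,q,-1)$ of $px+qy-z=\vec 0$ has zero coordinate-sum; since the coordinate-sum is symmetric, every permutation of the three arguments of $R$ (a relation visibly definable by $R$) carries nonaffine lines to nonaffine lines, and on the entry $\langle p,q\rangle$ it realizes the $S_3$-action on $(p:q:-1)$, producing six entries whose first coordinates are $p,q,1/p,1/q,-p/q,-q/p$. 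Thus I may freely replace $\langle p,q\rangle$ by any member of this orbit.

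The main tool is the self-composition $R^{(2)}(x,y,z):=(\exists v)\bigl(R(x,y,v)\wedge R(v,y,z)\bigr)$, which is definable by $R$ and whose table entries are the pairs $\langle p_ip_j,\ p_jq_i+q_j\rangle$. Iterating it $n$ times therefore puts into the table the line $L_n=\langle p^n,\ q\,\tfrac{p^n-1}{p-1}\rangle$ (and $\langle 1,\,nq\rangle$ when $p=1$) for the chosen nonaffine entry $\langle p,q\rangle$. I would then check directly that $L_n$ is good for all large $n$. Writing $t=p^n$ and $c=q/(p-1)$, so that the second coordinate of $L_n$ is $c(t-1)$, each forbidden equation — $p=\vec 0$, $q=\vec 0$, $p^2+q=\vec 0$, $p+q^2=\vec 0$, $p=q$, $p+q=1$ — becomes a fixed polynomial condition in $t$; as $t$ runs through the infinitely many distinct values $p^n$, such a condition holds for only finitely many $n$ unless the corresponding polynomial vanishes identically. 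A short computation shows the only one that can vanish identically is $p+q=1$, and that this happens precisely when $c=-1$, i.e. precisely when the line is affine. Nonaffineness excludes it, so $L_n$ is good for all large $n$, and the $n$-fold self-composition $R^{(n)}$ is the required relation $R'$.

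For this argument the values $p^n$ must be genuinely distinct, i.e. $p\neq -1$ (the case $p=1$ is already covered by the family $\langle 1,nq\rangle$, for which the same check applies). By the first paragraph I can arrange $p\neq -1$ by a suitable permutation of arguments, because among the six first coordinates $p,q,1/p,1/q,-p/q,-q/p$ all equal $-1$ only for the single configuration $\langle -1,-1\rangle$.

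The surviving case $\langle -1,-1\rangle$, where $R$ is (on independent arguments) the relation $x+y+z=\vec 0$, is the real obstacle, and I expect it to be the hardest part. This relation is so symmetric that, starting from an independent pair $x,y$, every composition that feeds $R$ only independent arguments keeps the output inside the three-point set $\{x,y,-x-y\}$, so no new line can be produced by such compositions alone. Here I would instead exploit the diagonal: $R(t,t,t)$ cuts out $\{3t=\vec 0\}=\{\vec 0\}$, so $\vec 0$ is definable; with $\vec 0$ available, negation is definable by $z=-w \leftrightharpoons (\exists s)(R(s,s,s)\wedge R(w,z,s))$, and hence so is addition, $z=x+y\leftrightharpoons(\exists w,s)(R(x,y,w)\wedge R(s,s,s)\wedge R(w,z,s))$; self-composing $z=x+y$ as above then yields the good line $\langle 1,2\rangle$. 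The delicate point is exactly this use of $R$ off the independent locus: one must verify that after the previous section's normalization $R$ genuinely behaves as $x+y+z=\vec 0$ on the diagonal (equivalently, that $R(t,t,t)$ defines $\{\vec 0\}$), since the table alone does not pin down $R$ on dependent tuples.
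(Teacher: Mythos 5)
Your main construction is sound, and it is genuinely different from (and more uniform than) the paper's: where the paper runs a separate ad hoc composition for each bad configuration ($p=q$; $p^2+q=0$ or $p+q^2=0$; $p=q=-1$), you dispose of every line except $\langle -1,-1\rangle$ with a single device — iterate $R^{(2)}$, set $t=p^n$, $c=q/(p-1)$, and observe that each forbidden condition is a polynomial in $t$ that vanishes identically only in the affine case $c=-1$, so it excludes only finitely many $n$. That computation checks out, as does the $S_3$-orbit reduction to first coordinate $\ne -1$.

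The genuine gap is your treatment of the surviving case $\langle -1,-1\rangle$. Everything you do there — $R(t,t,t)$ to define $\vec 0$, then $R(w,z,s)$ with $s=\vec 0$ to define $z=-w$ — evaluates $R$ at linearly \emph{dependent} tuples, and the hypotheses of the lemma say nothing about $R$ there: the table governs $R$ only on independent pairs, and normalization only forces $R(x,y,z)\to z\in l(x,y)$ when $x,y$ are dependent. The assumption you flag as "to be verified" is in fact false in general: take $R(x,y,z)\leftrightharpoons (z=-x-y \wedge x\ne y)$. This relation is definable, normalized, nonaffine, its table is exactly $\{\langle -1,-1\rangle\}$, yet $R(t,t,t)$ never holds, so $\vec 0$ cannot be extracted this way; likewise $R(w,z,\vec 0)$ constrains only dependent pairs $w,z$, so negation and addition do not follow even if $\vec 0$ were available. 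The missing idea — which is exactly what the paper supplies — is to escape the closure $\{x,y,-x-y\}$ not by going to dependent tuples but by adding a \emph{third independent variable}: for independent $x,y,u$ the chain $v_1=-x-y$, $v_2=-x-u$, $v_3=-y-u$, $v_4=-v_1-v_2$, $z=-v_4-v_3$ uses $R$ only on independent pairs and yields $z=-2x$; counting quantifiers $(\exists_{>M}y)(\exists_{>M}u)$ (Corollary \ref{note3}) then compress this into a dyadic relation $S(x,z)$ with finite fibers containing $z=-2x$, and composing $S$ with $R$ produces a definable relation whose table contains $z=2x+2y$ — a nonaffine line with first coordinate $\ne -1$, to which your main argument applies. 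With that substitution for the hard case, your proof is complete.
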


\begin{proof}
If $p=-1, q=-1$ then we consider the relation $Q(x,y,u,z) \leftrightharpoons (\exists v_1,v_2,v_3,v_4)(R(x,y,v_1) \land R(x,u,v_2) \land R(y,u,v_3) \land R(v_1,v_2,v_4) \land R(v_4,v_3,z))$. For independent $x,y,u$ it holds when $v_1=-x-y; v_2=-x-u; v_3=-y-u; v_4=-v_1-v_2; z=-v_4-v_2= -(-(-x-y)-(-x-u))-(-y-u) = -2x$. Take the relation $S(x,z) \leftrightharpoons (\exists_{>M}y)(\exists_{>M}v)Q(x,y,v,z)$ for sufficiently large $M$ (Corollary \ref{note3}). The set $\{z | S(x,z) \}$ is finite and contains $-2x$ for a nonzero $x$. So the table of relation $R'(x,y,z) \equiv (\exists x')(\exists y')(S(x,x') \land S(y,y') \land R(x',y',z))$ contains the item $z=2x+2y$.

If $p=q$, then choose the relation $(\exists z')(R(x,y,z') \land R(z',y,z))$ which contains the line $p^2x+q(p+1)y$.

If $p+q^2 = 0$ or $p^2+q = 0$, then we consider a sequence $R_0 \leftrightharpoons  R, R_{i+1}(x,y,z) \leftrightharpoons  (\exists z_1,z_2)(R_i(x,y,z_1) \land R_i(y,x,z_2)\land R_i(z_1,z_2,z))$ of relations. The table of $R_{i+1}$ contains the line $(p_i^2+q_i^2)x+2p_iq_iy=z$ for any $p_i x + q_i y=z$ from $R_i$. Hence the table of $R_k$ for sufficiently large  $k$ contains a line $p_k x+q_k y=z$ where $p_k,q_k \ne 0; p_k^2+q_k \ne 0; p_k+q_k^2 \ne 0; p_k \ne q_k, p_k+q_k \ne 0$.
\end{proof}

\begin{proof}\textbf{lemma \ref{main-2}}.

Suppose, that $R$ is nonaffine relation, i.e. $p +q \ne 1$ holds for some item $p x+q y=z$ of the table of $R$.

We are going to prove that $\vec 0$ is definable by $R$. To the contrary. 

Due to lemma \ref{add-1} we suppose, that $p,q \ne 0; p^2+q \ne 0; p+q^2 \ne 0; p \ne q$.

We define relations $R_1(x,y,z) \leftrightharpoons  (\exists z')(R(x,y,z') \land R(y, z',z)); R_2(x,y,z) \leftrightharpoons (\exists z')(R(y,x,z') \land R(z', y,z))$. Due to normalization we can suppose that tables $R_1, R_2$ contains no zero items.

We claim, that for some $K>0$, sufficiently large $M$, and for any $b \ne \vec 0$ holds $(\exists_{<K} v, v \ne b)(\exists_{>M} u)(\exists w)(R_1(u,b,w) \land R_2(u,v,w))$. 

First note that the line $p q x + (p + q^2) y$ is in the $R_1$ table, and the line $p q x + (q + p^2) y$ is in the $R_2$ table. 

Take $b \ne \vec 0$, and denote $S= \{ c | c \ne b,  (\exists_{>M} u)(\exists w)(R_1(u,b,w) \land R_2(u,c,w)) \}$ where $M$ is sufficiently large, as in Note \ref{note2-1}.

Show that the set $S$ is nonempty. Choose $c=b (p^2+q)/(q^2+p)$. It's easy to see that $c \ne b$, we will prove that $(\exists_{>M} u)(\exists w)(R_1(u,b,w) \land R_2(u,c,w))$. It's enough to demonstrate that  $(\exists w)(R_1(a,b,w) \land R_2(a,c,w))$ holds for any $a$, which is independent with $b$. For this we can set $w=p q a + (q + p^2) b$. 

The set $S$ can not be too big. Suppose $c \in S$. Because $M$ is sufficiently large, so $(\exists w)(R_1(a,b,w) \land R_2(a,c,w))$ holds for any $a \not \in \mathcal V(\{b,c\})$. It means that $q_2 c = q_1 b$ for some $q_1, q_2, p_0$, such that $p_0 x + q_1 y$ is in the $R_1$ table, and  $p_0 x + q_2 y$ is in the$R_2$ table. Because all table constants are nonzero, there are fixed number of such $c$.

We supposed that $\vec 0$ is indefinable, so $(\exists_{<K} v, v \ne \vec 0)(\exists_{>M} u)(\exists w)(R_1(u,\vec 0,w) \land R_2(u,v,w))$ has to hold. Contradiction.
\textbf{End of lemma \ref{main-2} proof.}
\end{proof}

Due to lemma \ref{main-2} from now we consider $\vec 0$ as the symbol of the signature.

We call a relation $R(x,y,z)$ \emph{simple}, if sentence \textit {$R(a,b,c) \Leftrightarrow$ ($c=pa+qb$ for some table line $px+qy=z$ )} holds not only for independent $a,b$ but also for any nonzero $a,b,c$.

Define $R'(x,y,\Delta,z) \leftrightharpoons  (\exists v)(R(x,\Delta,v) \land R(v,y,z))$, $R''$ is the normal form of $R'(x,x,\Delta,z)$. Now denote by $R^*_M$ the normal form of  $(\exists_{>M} \Delta)(\exists w)(R'(x,y,\Delta,w) \land R''(z,z,\Delta,w))$. The relation $R^*_M$ we call \emph{simplification} of $R$.

\begin{lemma}\label{prim}
For any relation $R$ there is a simple definable by $R$ relation.
\end{lemma}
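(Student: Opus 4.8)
The plan is to show that the explicitly constructed relation $R^*_M$ is simple for all sufficiently large $M$. The idea behind the construction is that composing $R$ through a fresh generic direction $\Delta$ forces every elementary application of $R$ onto a \emph{linearly independent} pair, where $R$ is guaranteed to obey its table; this is precisely what lets us bypass the possibly pathological behaviour of $R$ on dependent tuples. The computations of the tables are routine, so the real content will be the uniformity of the argument across all nonzero $a,b,c$.

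First I would compute the relevant tables. For $\Delta$ independent of $a,b$, an inner point $v$ with $R(a,\Delta,v)$ satisfies $v=p_i a+q_i\Delta$ for some table line $(p_i,q_i)$; since $q_i\neq0$ by normalization, $v$ has a nonzero $\Delta$-component while $b$ does not, so the pair $v,b$ is again independent (even when $b$ is a multiple of $a$), and $R(v,b,w)$ yields $w=p_ip_j a+q_j b+p_jq_i\Delta$. Hence for every nonzero $a,b$ and every $\Delta\notin\mathcal{V}(\{a,b\})$ we have $R'(a,b,\Delta,w)$ exactly when $w$ equals one of these finitely many values, regardless of whether $a,b$ are linearly dependent. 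The same computation on the diagonal shows that for generic $\Delta$ the relation $R''(c,c,\Delta,w)$ holds iff $w=(p_kp_l+q_l)c+p_lq_k\Delta$, where lines with $p_kp_l+q_l=0$ are discarded by normalization.

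Next I would analyse the matching condition $(\exists w)(R'(a,b,\Delta,w)\wedge R''(c,c,\Delta,w))$ as $\Delta$ ranges over directions independent of $\{a,b,c\}$. A common $w$ requires $p_ip_j a+q_j b+p_jq_i\Delta=(p_kp_l+q_l)c+p_lq_k\Delta$. Because $\Delta\neq\vec 0$, the $\Delta$-terms force $p_jq_i=p_lq_k$, after which the residual equation $p_ip_j a+q_j b=(p_kp_l+q_l)c$ no longer mentions $\Delta$. Thus if some matched pair of table lines makes this residual equation hold, a common $w$ exists for \emph{every} generic $\Delta$, giving infinitely many witnessing $\Delta$; otherwise each unmatched tuple $(i,j,k,l)$ contributes at most the single exceptional value $\Delta=((p_kp_l+q_l)c-p_ip_j a-q_j b)/(p_jq_i-p_lq_k)$, so altogether only finitely many $\Delta$ can work. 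By Corollary \ref{note3} this finite count is bounded by a number $K$ uniform in $a,b,c$; taking $M=K$, the prefix $\exists_{>M}\Delta$ selects exactly the first alternative.

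Therefore, for all nonzero $a,b,c$, the relation $R^*_M(a,b,c)$ holds iff $c=(p_ip_j a+q_j b)/(p_kp_l+q_l)$ for some table lines with $p_jq_i=p_lq_k$ — one and the same condition in every case, which is precisely simplicity; moreover the diagonal choice $i=j=k=l$ survives normalization (its coefficients $p_i^2/(p_i^2+q_i)$ and $q_i/(p_i^2+q_i)$ are nonzero by the hypotheses of Lemma \ref{add-1}), so the table of $R^*_M$ is nonempty and $R^*_M$ is genuinely triadic. The main obstacle is exactly this uniformity: I must argue that a \emph{single} bound $M$ works simultaneously for all nonzero $a,b,c$ and for every dependence pattern among them, so that the passage from ``a common $w$ for generically all $\Delta$'' to ``more than $M$ many $\Delta$'' is legitimate. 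This is where Note \ref{note2-1} and Corollary \ref{note3} are essential, and where one must verify carefully that no exotic collinear configuration of $a,b,c$ secretly produces more than $K$ good values of $\Delta$.
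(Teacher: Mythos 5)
Your proposal is correct and is essentially the paper's own proof: the same generic-$\Delta$ table computations for $R'$ and $R''$, the same matching condition on the $\Delta$-coefficients with its residual equation identifying the table of $R^*_M$, and the same appeal to Corollary \ref{note3} to make the bound uniform in $a,b,c$. The one adjustment needed is in the $\Rightarrow$ direction: instead of first claiming that only finitely many $\Delta$ can work (your exceptional-value count covers only $\Delta \not\in \mathcal{V}(\{a,b\}) \cup \mathcal{V}(\{c\})$, where the table computations are valid), apply Corollary \ref{note3} contrapositively as the paper does --- if more than $K$ values of $\Delta$ witness the conjunction, then some $\Delta \not\in \mathcal{V}(\{a,b,c\})$ witnesses it, and your matching analysis for such $\Delta$ forces a matched residual --- which automatically disposes of the ``exotic collinear configuration'' worry in your last paragraph, with no further verification required.
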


\begin{proof}
Due to lemma \ref{add-1} we suppose that in the table of $R$ there is a pair such that  $p,q \ne 0; p^2+q \ne 0; p+q^2 \ne 0; p \ne q$.

We show that for sufficiently large $M$ the simplification $R^*_M$ of $R$ is simple.

Note that the table of $R^*_M$ contains lines $(p_1p_2x+q_1y)/(p_3p_4+q_3)$ where $p_ix+q_iy, i=1,2,3,4$ are (not necessarily different) lines of $R$ and $p_1q_2=p_3q_4, p_3p_4+q_3 \ne 0$.

Take $a,b,c \ne \vec 0$. Prove that for sufficiently large $M$ holds $R^*_M(a,b,c) \Leftrightarrow (p^*a+q^*b=c$ \emph{for some} $p^*,q^*$ \emph{from table} $R^*$)

(i) $\Leftarrow$. $c=(p_1p_2a+q_1b)/(p_3p_4+q_3)$ for some 4 lines of $R$, such, that $p_1q_2=p_3q_4$. Choose a vector $\Delta \not \in \mathcal V(\{a,b,c\})$. Then all pairs $\{a, v\} ,\{v, b\}, \{v', c\}$ are independent, $R(a,\Delta,v)$ and $R(c,\Delta,v')$ holds, so $R^*_M(a,b,c)$ holds as well.

(ii) $\Rightarrow$. To the contrary. Suppose that $p^*a+q^*b \ne c$ for all lines of table $R^*_M$, but $(\exists_{>M} \Delta)(\exists w)(R'(a,b,\Delta,w) \land R'(c,c,\Delta,w))$. The $M$ is sufficiently large to ensure that there is a vector $\Delta \not \in \mathcal V(\{a,b,c\})$ and $R'(a,b,\Delta,w) \land R'(c,c,\Delta,w)$ holds (Corollary \ref{note3}). From independency follows that $w=p_1q_2\Delta+p_1p_2a+q_1b=p_3q_4\Delta+(p_3p_4+q_4)c$, where $p_3p_4+q_4 \ne 0$. And again from independency $p_1q_2=p_3q_4, c=(p_1p_2a+q_1b)/(p_3p_4+q_3)$ -- contradiction.
\end{proof}

\begin{lemma}\label{fin}
The relation $z=\pm x \pm y$ is definable by any simple relation.
\end{lemma}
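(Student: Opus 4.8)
The plan is to reduce the problem to the already-settled affine case. Throughout I use that $\vec 0$ is now in the signature (Lemma \ref{main-2}) and that $R$ is simple, so substituting $\vec 0$ into its arguments reads dyadic relations straight off the table: $R(\vec 0,y,z)$ and $R(x,\vec 0,z)$ give $\bigvee_i z=q_i y$ and $\bigvee_i z=p_i x$, while $R(x,y,\vec 0)$ gives $\bigvee_i y=-(p_i/q_i)x$.

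First I would show that negation $z=-x$ is definable from $R$. The ratios above generate a multiplicative subgroup of $\mathbb{Q}$ containing $-1$: the quotient $-(p_i/q_i)$ is present, and multiplying it by $q_i/p_i$ (a product of the ratios $q_i$ and $p_i^{-1}$) yields $-1$. Since $\pm 1$ are the only rationals of absolute value $1$, and since any permutation preserving a dyadic relation preserves the absolute values of its ratios (Statements \ref{st1}, \ref{st2}) while fixing the identity ratio, such a permutation must fix $-1$; hence $z=-x$ is definable. Using negation it then suffices to define $z=\pm(x+y)$, because $z=\pm x\pm y$ is the disjunction of $z=\pm(x+y)$ with the same relation applied to $x,-y$ — equivalently, it is the sign-closure of ordinary addition.

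The engine for producing an affine relation is the \emph{common value of two table lines}. If the table of $R$ has a single line, the matter is easy: $R(\vec 0,y,z)$ and $R(x,\vec 0,z)$ are single-valued, so scalar multiplication by $p,q$ and their inverses is definable, the line normalizes to $(1,1)$, and $R$ defines $z=x+y$, hence $z=\pm x\pm y$. For two lines $(p_1,q_1),(p_2,q_2)$ with $p_1\ne p_2$ I would instead set $Q(u,v,w):=(\exists c)(R(w,u,c)\land R(w,v,c)\land u\ne v)$. For independent $u,v$ the set of admissible $w$ is finite and, solving $p_iw+q_iu=p_jw+q_jv$, consists of the points $w=(q_jv-q_iu)/(p_i-p_j)$, whose coefficient-sum is $(q_j-q_i)/(p_i-p_j)$; this equals $1$ exactly when $p_1+q_1=p_2+q_2$. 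Thus, once the table is reduced to two lines of equal coefficient-sum and distinct first coefficients, $Q$ is affine, Corollary \ref{seq1} yields $z=x+y$ (scaling the definable midpoint by $2$ when needed, using that $\vec 0$ is available), and together with negation this defines $z=\pm x\pm y$.

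It remains to drive the table down to two lines of equal coefficient-sum, and this is where I expect the main obstacle to lie. The coefficient-sum $t=p+q$ transforms by $t\mapsto -t$ under negation of an argument, by $t\mapsto t^{2}$ under the symmetric composition $(p,q)\mapsto(p^{2}+q^{2},2pq)$, and by $t\mapsto\lambda t$ when the whole table is multiplied by a definable scalar $\lambda$ — the same operations already used in Lemma \ref{add-1}. Taking the disjunction of the table with a scaled copy forces a collision $\lambda t_i=t_j$ whenever the ratio $t_j/t_i$ is a definable scalar, giving two lines of equal sum; the cross-line entries of $Q$ are then exactly the affine lines sought, and the counting quantifiers of Corollary \ref{note3} ensure every intermediate relation is genuinely definable. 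The delicate point is controlling \emph{which} scalars are definable from the dyadic part of $R$, so that the required collision can always be arranged and $Q$ isolates precisely the affine lines rather than a contaminated mixture; pinning this down is exactly where the finer dyadic analysis of Section 2 must be invoked.
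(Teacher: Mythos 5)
Your plan has a structural flaw that no amount of extra work on the deferred step can repair: it routes every case through defining a nontrivial \emph{affine} relation and then, via Corollary \ref{seq1}, the relation $z=x+y$. But Lemma \ref{fin} must apply in particular to $R$ being the relation $z=\pm x\pm y$ itself, which is simple (table $(1,1),(1,-1),(-1,1),(-1,-1)$). By Statement \ref{+-} its automorphism group is $GL^\pm(\mathcal{M})$, and a sign-flip $\sigma\in S^\pm(\mathcal{M})$ that negates $b$ while fixing $a$ and $c=pa+(1-p)b$ (with $a,b$ independent, $p\ne 0,1$) shows that $GL^\pm(\mathcal{M})$ preserves no nontrivial affine relation; hence neither $z=x+y$ nor the midpoint is definable from this $R$. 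Concretely, for this $R$ your $Q(u,v,w)$ has table $w=\pm(u\pm v)/2$, and the affine line $w=(u+v)/2$ can never be isolated from the ``contamination'' --- not because the dyadic analysis is delicate, but because that line is not definable from $R$ at all. So the step you explicitly defer (``pinning this down is exactly where the finer dyadic analysis of Section 2 must be invoked''), which is the heart of the lemma, is not a technical gap but an impossibility in the sign-symmetric case. Any correct proof must aim directly at the sign-symmetric relation $z=\pm x\pm y$, whose automorphism group is the larger $GL^\pm(\mathcal{M})$, rather than at addition; this is exactly what the paper does in its Steps 1--3, first normalizing the table to a product $P\times Q$ and then extracting lines of the form $a(x\pm y)$, carrying the $\pm$ all the way through.

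There are also two defects at the very start. First, simplicity is defined only for \emph{nonzero} $a,b,c$, so the values of $R(\vec 0,y,z)$, $R(x,\vec 0,z)$, $R(x,y,\vec 0)$ are not determined by the table; your read-offs are unjustified, and this is precisely why the paper extracts its dyadic relations $W_1,W_2$ with counting quantifiers instead of substituting $\vec 0$. Second, Statements \ref{st1} and \ref{st2} do not say that a permutation preserving a dyadic relation ``preserves the absolute values of its ratios'': they compare the induced permutations $\sigma_a$ at different points of one equivalence class, and an automorphism may well send a ratio $r_i$ to a ratio $r_j$ with $|r_j|\ne |r_i|$ (e.g., swapping the two ratios of $y=2x\lor y=3x$ is allowed). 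So your derivation of $z=-x$ does not follow from the cited statements --- and in any case negation was never the crux: in the sign-symmetric example above $z=-x$ \emph{is} definable, yet the rest of the plan still collapses.
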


\begin{proof}

Let $R$ be simple relation. The proof of lemma \ref{fin} consists from few steps.

\begin{step}\label{fin1}

Denote be $Q$ the set of second components of the table $\{(p_i,q_i)\}$ of relation $R$.


(i) There is a relation definable by $R$ which table is $P \times Q$ for some nonempty $P$.

(ii) The relation $D(y_1,y_2) \Leftrightarrow y_1=(q_i/q_j)y_2, q_i, q_j \in Q$ is definable by $R$.

\end{step}
\begin{proof}

Let $N$ be the number of items in the table $R$. Define dyadic  relations $W_1(x,z) \leftrightharpoons  (x \ne \vec 0) \land (\exists_{<N}y, y \ne \vec 0)R(x,y,z), W_2(y,z) \leftrightharpoons  (y \ne \vec 0) \land (\exists_{<N}x, x \ne \vec 0)R(x,y,z)$. Note that for independent $\bar a, \bar b$ holds $\lnot W_1(\bar a,\bar b)$ because each table line $p_i \bar a+q_iy=\bar b$ has a nonzero solution in $y$ and all solutions are different. Hence $\{z | W_1(\bar a,z)\}$ is a finite subset of $l(\vec 0,a)$ for $a \ne \vec 0$. From the other hand $W_1(\bar a, p_i\bar a)$ holds for any $a \ne \vec 0$ and $p_i$ from the table $R$ because $p_i \bar a+q_iy=p_i \bar a$ has no nonzero solution.

Consider a simple relation  $R_1(x,y,z) \leftrightharpoons  (\exists v)(W_1(v,x) \land R(v,y,z))$.

Note that table of $R_1$ contains the line $x+q_iy$ for each $q_i \in Q$. Define a relation $R_2(x,y,z) \leftrightharpoons  R_1(x,y,z) \land (\exists_{=K}v)(y \ne v \land R(x,v,z) \land (\exists w)(W_2(w,y) \land W_2(w,v)))$, where  $K$ is the number of items in $Q$. 

We show, that for independent $x,y$ holds $R_2(x,y,z) \Leftrightarrow (p_ix+q_iy=z$ \emph {for $\{p_i, q_i\}$ from the table $R_1$ such that $\{p_i,q\}$ belongs to the table of $R_1$ for any $q \in Q$}).

$\Rightarrow$. If $R_2(x,y,z)$ holds then $p_ix+q_iy=z$ for some $\{p_i,q_i\}$. If $(\exists w)(W_2(w,y) \land W_2(w,v))$ holds then $v=ry$ for some $r \in \mathbb Q$. Hence there are $K$ different rational numbers $r_1,\dots,r_K$ such that $p_jx+q_jr_my=z$ holds for some $\{p_j,q_j\}$. From the independency jf $x,y$ follows that $p_j=p_i$ and all $q_j$ are different.

$\Leftarrow$. Let $p_ix+q_iy=z$ holds for some $p_i$, such that $\{p_i,q\}$ belongs to the table of $R_1$ for any $q \in Q$. Note that $(\exists w) (W_2(w,y) \land W_2(w,v)$ holds for any $v=(q/q_i)y, q \in Q$ so $R_2(x,y,z)$ holds.

The table of $R_2$ is not empty because $R_2(x,y,x+q_iy)$ holds for any $q_i \in Q$.

Define $D(y_1,y_2) \leftrightharpoons  (\exists_{>M} x,z)(R^*(x,y_1,z) \land R^*(x,y_2,z))$ for sufficiently large $M$.
\end{proof}

\begin{step}\label{fin2} 

There is a relation definable by $R$ which table contains lines of form  $a(x-y)=z$ or $a(x \pm y)=z$ only.
\end{step}
\begin{proof}

If $R_2(x,y,z)$ holds for independent $x,y$ then $S_{x,y,z}=\{y' | R_2(x,y',z) (\exists w) (W_2(w,y) \land W_2(w,y')\}$ consists of  items $q y/q_i, q \in Q$, where $z=p_ix+q_iy$. So $S_{x,y,z_1}=S_{x,y,z_2}$ if $z_1=p_1x+q_1y, z_2=p_2x+q_2y$ and or $q_1=q_2$ or $q_1=-q_2$ (in the latter case $Q$ has to contain $-q$ for any $q \in Q$).

Consider the relation  $R_3(x,y,z) \leftrightharpoons  (\exists v)(S_{z,v,x}=S_{z,v,y})$. For independent $a,b$ the equality $S_{x,y,a}=S_{x,y,b}$ holds if  $x,y$ is a solution of system of linear equations $p_ix+q_iy=a, p_jx+q_jy=b$, where $q_i=\pm q_j$. In other words $R_3(x,y,z) \to z=(x-y)/(p_i -p_j)\lor z=(x+y)/(p_i+p_j)$ and in the case $z=(x+y)/(p_i+p_j)$ the table must contain the line $z=(x-y)/(p_i -p_j)$ as well.
\end{proof}

\begin{step}
The relation $z=\pm x \pm y$ is definable by $R$.
\end{step}

Consider the relation  $R_4(x,y,\Delta,z) \rightleftharpoons (\exists v,v')(R_3(z,\Delta,v) \land R_3(x,\Delta,v') \land R_3(v',y,v))$. In fact the  $R_4(x,y,\Delta,z)$ is equivalent to disjunction of equations $z+a_i \Delta=x+a_j \Delta + a_k y$ for nonzero $x,y$ and $\Delta \not \in \mathcal V(\{x,y,z,\})$. Note that the table of $R_4$ is a subset of the table of $R_3$.

We consider two cases.

Case 1. The table of $R_4$ contains lines $z=x+q_iy$ only.

Let $N$ be a number of lines in the table of $R_4$ and denote by $W(y,z) \rightleftharpoons (\exists_{<N}x)(x \ne \vec 0 \land R_4(x,y,z))$. It is easy to note that $W(y,z) \Leftrightarrow z=q_iy$.

Now we define $R_5(x,y,z) \rightleftharpoons (\forall y_1)(W(y_1,y) \Rightarrow R_4(x,y_1,z))$ and show that $R_5(x,y,z) \Leftrightarrow z=x+y$ or $R_5(x,y,z) \Leftrightarrow z=x \pm y$.

The sentence  $R_5(x,y,z)$ holds iff for any $q_i$ there is $q_j$ such that $z=x+(q_i/q_j)y$. It follows that $q_i=\pm q_j$.

Case 2. The table of $R_4$ contains lines $z=x+q_iy$ and lines $z=-x+q_jy$.

According item (ii) Step \ref{fin2} the relation $x=\pm y$ is definable by $R_4$, so we can consider the relation $R_5(x,y,z) \rightleftharpoons R_4(\pm x, \pm y, z)$ the table of which is $z=\pm x \pm q_iy$. Denote by $W(y,z) \rightleftharpoons R_5(z,z,y) \land y \ne \vec 0$. It is easy to see, that $W(y,z) \Leftrightarrow y = 2z / q_i$. We use the same arguments as in case 1 to show that $(\forall y_1)(W(y,y_1) \to R_5(x,y_1,z)) \Leftrightarrow z=\pm x \pm 2 y$. We can now define $z=\pm 2 y, y \ne \vec 0$ as $z \ne \vec 0 \land (\exists_{=2}x)(x \ne \vec 0 \land z=\pm x \pm 2y)$ hence $z=\pm x \pm y$ is definable.
\end{proof}

By the symbol $S^\pm(\mathcal{M})$ we denote the group of permutations of $\mathcal{M}$ which satisfy condition $\sigma(x)=\pm x, x \in \mathcal{M}$. Symbol $GL^\pm(\mathcal{M})$ denotes $\langle GL(\mathcal{M}), S^\pm(\mathcal{M})\rangle $. 

\begin{state} \label{+-}
$GL^\pm(\mathcal{M})$ is the group of automorphisms of the relation $z=\pm x \pm y$
\end{state} 
\begin{proof}
(i) It is obvious that $\sigma \in GL^\pm(\mathcal{M}) \Rightarrow \sigma$ preserves $z=\pm x \pm y$.

(ii) Suppose, that $\sigma$ preserves $z=\pm x \pm y$. Note that $\sigma (p x)=\pm p \sigma(x)$. Choose $v_1,\dots, v_n,\dots$ a basis of $\mathcal{M}$. We can suppose that $\sigma(v_i)=v_i$, so $\sigma(\sum p_i v_i) = \sum \pm p_i v_i$. Consider a sequence $w_k=\sum_{i=1}^k v_i$ and find a such permutation $\sigma' \in GL(\mathcal{M}), \sigma'(v_i)=\pm v_i$, that for any $m$ there is an $n>m, \sigma^*(w_n)=w_n$ where $\sigma^*=\sigma' \circ\sigma$. Take an item $a \in \mathcal{M}$ and show that $\sigma^*(a)=\pm a$. Select such $n$ that $i>n \to (a)_i=0$. Then $\sigma^*(w_n+a)=\sum_{i=1}^n \pm(1+p_i) v_i=\pm \sigma^*(w_n) \pm \sigma^*(a) = \pm w_n \pm \sigma^*(a)$, so $\sigma^*(a)=\pm a$.
\end{proof}

\begin{cor} \label{3-1}
If $R$ is nonaffine relation, then $z=\pm x \pm y$ is definable by $R$. If $GL^\pm(\mathcal{M})$ preserves $R$ then $R$ is equivalent to $z=\pm x \pm y$.
\end{cor}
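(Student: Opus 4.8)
The plan is to derive both assertions from the machinery already assembled in this section, so the corollary reduces to bookkeeping with the Galois correspondence between reducts and permutation groups set up in the Preliminaries (where a relation $R'$ is definable by $R$ exactly when $\mathrm{Aut}(R)\subseteq\mathrm{Aut}(R')$, and two relations are reduct-equivalent exactly when $\mathrm{Aut}(R)=\mathrm{Aut}(R')$; here $\mathrm{Aut}(R)$ denotes the group of permutations of $\mathcal{M}$ preserving $R$).

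For the first assertion I would simply chain the three principal lemmas. Given a nonaffine $R$, Lemma \ref{main-2} makes $\vec 0$ definable, so I may treat $\vec 0$ as a signature symbol; Lemma \ref{prim} then produces a simple relation definable by $R$; and Lemma \ref{fin} shows that $z=\pm x\pm y$ is definable from any simple relation. Composing these three definability steps yields that $z=\pm x\pm y$ is definable by $R$, with no further work.

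For the second assertion I would assume $GL^\pm(\mathcal{M})\subseteq\mathrm{Aut}(R)$ and first verify that $R$ is nonaffine. Were $R$ affine, Corollary \ref{seq1} would force $\mathrm{Aut}(R)$ to equal $GL(\mathcal{M})$ or $AGL(\mathcal{M})$; but a sign-flip $\tau\in S^\pm(\mathcal{M})$ that swaps a single pair $w\leftrightarrow -w$ and fixes all other points lies in $GL^\pm(\mathcal{M})$ while being neither linear nor affine (it fixes $\vec 0$ but is not additive), so it lies in neither $GL(\mathcal{M})$ nor $AGL(\mathcal{M})$, contradicting the assumed inclusion. Once $R$ is known to be nonaffine I would close the loop: by the first assertion $z=\pm x\pm y$ is definable by $R$, i.e. $\mathrm{Aut}(R)\subseteq\mathrm{Aut}(z=\pm x\pm y)$, and by Statement \ref{+-} this last group is exactly $GL^\pm(\mathcal{M})$; combined with the hypothesis $GL^\pm(\mathcal{M})\subseteq\mathrm{Aut}(R)$ this gives $\mathrm{Aut}(R)=GL^\pm(\mathcal{M})=\mathrm{Aut}(z=\pm x\pm y)$, hence reduct-equivalence of $R$ with $z=\pm x\pm y$.

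I expect no serious obstacle, since all the analytic content lives in the already-proved Lemmas \ref{main-2}, \ref{prim} and \ref{fin}. The only genuine step inside the corollary is excluding the affine case, witnessed by the single sign-flip permutation above; the main thing to watch is the direction of the inclusions in the Galois correspondence and the standing convention that $R$ is a genuine triadic relation (nonempty normal form), which makes the affine/nonaffine dichotomy exhaustive.
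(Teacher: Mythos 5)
Your proof is correct and follows exactly the route the paper intends for this (unproved) corollary: chain Lemma \ref{main-2}, Lemma \ref{prim} and Lemma \ref{fin} for definability of $z=\pm x\pm y$, then combine the resulting inclusion $\mathrm{Aut}(R)\subseteq GL^\pm(\mathcal{M})$ (via Statement \ref{+-}) with the hypothesis $GL^\pm(\mathcal{M})\subseteq\mathrm{Aut}(R)$ to get reduct-equivalence. Your extra step ruling out the affine case by a single sign-flip permutation is sound but strictly unnecessary if one reads the second sentence, as the paper does, with $R$ still the nonaffine relation of the first sentence.
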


We postpone the proof of statement \ref{n_aff} till the statement \ref{4-4} where more general situation is discussed.









\section{Relations with more then 3 arguments.}

We prove

\begin{state} \label{s4-1}
If a relation $R(x_1,\dots,x_n)$ is 2-undefinable, then a triadic 2-undefinable relation is definable by  $R$.
\end{state}

The notion of table can be generalized for a relation with more than 3 arguments.

First we prove 

\begin{lemma} \label{l4-2}
If a relation $R(x_1,\dots,x_n)$ is 2-undefinable, then by $R$ can be defined a relation $R'(x_1,\dots,x_k)$ which table contains a line $x_k=\sum p_i x_i$, where $p_1,p_2 \ne 0$.
\end{lemma}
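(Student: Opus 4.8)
The plan is to argue by contraposition: I will show that if the table of $R$ contains no line with two nonzero coefficients, then $R$ is 2-definable, contradicting the hypothesis. First I set up the generalized table. Using the standing assumption $\lnot R(\bar a)$ for independent $\bar a$ together with Note \ref{note1}, I fix $x_1,\dots,x_{n-1}$ independent; any dependence occurring in the disjunction of Note \ref{note1} that can actually hold must involve $x_n$ (otherwise it is a dependence among the independent $x_1,\dots,x_{n-1}$), and hence is solvable for $x_n$. As in the triadic case this yields the table: a finite list of forms $x_n=\sum_{i=1}^{n-1} p_{j,i}x_i$ with $R(\bar x)\equiv \bigvee_j \big(x_n=\sum_i p_{j,i}x_i\big)$ for independent $(x_1,\dots,x_{n-1})$.

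Before the main step I normalize $R$ exactly in the spirit of the triadic section: by restricting $R$ with conditions of the form $(\exists_{<k} z)$ and by discarding, via the $(\exists_{>K})$-trick of Note \ref{note2-1}, those lines along which some variable is left unconstrained, I arrange that the behaviour of $R$ on dependent tuples is the restriction of its table behaviour, so that $R$ is faithfully described by its table on all tuples. These operations are definable from $R$ and do not affect 2-(un)definability.

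The key step is then short. Suppose every table line $x_n=\sum_i p_{j,i}x_i$ has at most one nonzero coefficient, i.e. is a single dyadic constraint $x_n=p_j x_{i_j}$. Then $R\equiv \bigvee_j (x_n=p_j x_{i_j})$ is a first-order formula over the signature $\{y=rx \mid r\in\mathbb{Q}\}$, so $R$ is 2-definable, contradicting the hypothesis. Hence some table line $x_n=\sum_i p_i x_i$ has at least two nonzero coefficients $p_{i_0},p_{i_1}$; permuting the arguments of $R$ (an operation definable from $R$) so that $i_0,i_1$ become the first two indices gives a relation $R'(x_1,\dots,x_k)$, with $k=n$, whose table contains a line $x_k=\sum p_i x_i$ with $p_1,p_2\ne 0$.

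I expect the main obstacle to be the passage from generic to global, hidden in the normalization and the key step: the table records only the behaviour of $R$ on independent tuples, so one must be sure that a relation whose generic table is purely single-coefficient cannot smuggle genuine additive information into near-degenerate or degenerate configurations. Making the triadic restriction arguments work uniformly for arbitrary arity—so that a single-coefficient table really does certify 2-definability of $R$ as a whole (equivalently, that $LGL(\mathcal{M})=\langle GL(\mathcal{M}),Sym_l\rangle$, whose elements send lines through $\vec 0$ to lines through $\vec 0$, preserves $R$)—is where the argument is most delicate.
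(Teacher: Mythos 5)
Your reduction to the contrapositive founders on exactly the obstacle you flag at the end, and flagging it is not the same as resolving it: the claim that after normalization ``$R$ is faithfully described by its table on all tuples'' is not merely delicate, it is false, and no restriction of the form $(\exists_{<k}z)$ or $(\exists_{>K})$-pruning of table lines can make it true. The table records only the behaviour of $R$ on tuples whose first $n-1$ coordinates are independent, while a 2-undefinable relation can carry all of its additive content on dependent loci, where the table sees nothing. Concretely, take
$$R(x_1,x_2,x_3,x_4) \;\equiv\; \big(x_2=2x_1\big) \land \big(x_4=x_1+x_3\big).$$
Its table (with respect to independent $x_1,x_2,x_3$) is empty, and stays empty under all of your normalization moves; yet $R$ is 2-undefinable: a permutation fixing $\vec 0$, mapping every line through $\vec 0$ to a line through $\vec 0$ and acting linearly on each such line (say, swapping the lines of $a_1+a_3$ and $a_1+2a_3$) preserves every relation $y=rx$ but sends the $R$-tuple $(a_1,2a_1,a_3,a_1+a_3)$ to $(a_1,2a_1,a_3,a_1+2a_3)$, which is not in $R$. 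So ``no table line with two nonzero coefficients'' does not imply ``$R$ is 2-definable'', and your key step collapses. Note also that in this example the witness demanded by the lemma is $R'(x_1,x_3,x_4)\equiv(\exists x_2)R(x_1,x_2,x_3,x_4)$, a projection with $k<n$; your argument only ever permutes the arguments of $R$ itself (so always $k=n$) and therefore cannot even reach the right conclusion here.

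This is precisely the difficulty the paper's proof is built around, and it is why that proof is long: it runs an induction on the number of arguments, on the rank of $R$, and on the number of independent configurations. In the easy case (some witness coordinate $b'_j$ lying off all lines $l(\vec 0,a_i)$), existentially quantifying out the remaining coordinates produces the desired table line --- already a projection, not a permutation of variables. In the hard case --- your ``purely dyadic table'' case --- the paper does \emph{not} conclude that $R$ is 2-definable; instead it builds a 2-definable relation $Q$ agreeing with $R$ on independent tuples, forms $P(\bar x)\equiv(\forall \bar y)(R(\bar x,\bar y)\equiv Q(\bar x,\bar y))$, and splits $R\equiv(P\land R)\lor(\lnot P\land R)$, so that 2-undefinability is pushed either into $P$ (fewer arguments) or into $\lnot P\land R$ (fewer independent configurations), and the induction descends into the dependent loci. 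Some recursion of this kind is unavoidable because of examples like the one above; a one-step ``generic table'' argument cannot prove the lemma.
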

\begin{proof} 

We use the notion \emph{rank} from the statement \ref{st0}.

Proof by induction on number of arguments of $R$, on rank $k$ of $R$ and number of corresponding tuples $x_{i_1}, \dots, x_{i_k}$ of length $k$ . 

Let a rank of $R$ be $k<n$, we can suppose that $R(a_1,\dots,a_k,b_1,\dots,b_{n-k})$ holds for independent $\bar a$. Note that because a rank of $R$ is $k$, there are only finite numbers such tuples $\bar b'$, that $R(\bar a, \bar b')$ holds. 

We consider 2 cases:

(A) There is a tuple $\bar b'$, such that $R(\bar a, \bar b')$ holds and some $b'_j \not \in \bigcup_{i=1}^k l(\vec 0,a_i)$. 

Then it's easy to note that the relation $R'(x_1,\dots,x_k,y_j) \leftrightharpoons  (\exists y_1,\dots,y_{j-1},y_{j+1},\dots,y_{n-k})R(\bar x,\bar y)$ satisfies the required condition.

(B) For any tuple $\bar b$ if $R(\bar a, \bar b)$ holds then each $b_j$ belongs to a line $l(\vec 0,a_i)$. 

In this case we will construct a definable by $R$ 2-definable relation $Q(\bar x,\bar y)$, such that for independent $\bar a$ holds $(\forall \bar y)(R(\bar a,\bar y) \equiv Q(\bar a,\bar y))$.

Next we consider relations $P(x_1,\dots,x_k) \leftrightharpoons  (\forall \bar y)(R(\bar x,\bar y) \equiv Q(\bar x,\bar y)), R_1(\bar x,\bar y) \leftrightharpoons  P(\bar x) \land R(\bar x,\bar y); R_2(\bar x,\bar y) \leftrightharpoons  \lnot P(\bar x) \land R(\bar x,\bar y)$. Note, that $R \equiv R_1 \lor R_2$, so one of them has to be 2-undefinable.

From the definition of $P$ follows that $R_1(\bar x,\bar y) \equiv P(\bar x) \land Q(\bar x,\bar y)$, so if $P$ (with a less number of arguments than $n$) is 2-definable, then$R_1$ is 2-definable as well. From the other hand from properties of $Q$ follows that $P(\bar a)$ holds for independent $\bar a$, so  $(\forall y) \lnot R_2(\bar a, \bar y)$ holds. So the number of independent tuples of length $k$ for $R_2$ is less than for $R$.


So it remains to construct such relation $Q(\bar x,\bar y)$, that

(i) for independent $\bar a$ holds $(\forall \bar y)(R(\bar a,\bar y) \equiv Q(\bar a,\bar y))$.

(ii) $Q$ is 2-definable.

(iii) $Q$ is definable by $R$.

Let us remind  that we consider the case when each  $b_j$ belongs to a line $l(\vec 0,a_i)$. For any tuple $s=\langle m_1,\dots,m_{n-k} \rangle, m_j \leqslant k$ we will define a individual relation  $Q_s$, describing the situation when $b_j \in l(\vec 0,a_{m_j})$ and set $Q \leftrightharpoons \bigvee_s Q_s$.

So we fix a $s=\langle m_1,\dots,m_{n-k} \rangle, m_j \leqslant k$. For each $j \leqslant {n-k}$ define a relation 
$$S_j(x_{m_j},y_j) \leftrightharpoons  (\exists_{>M} x_1,\dots,x_{m_j-1},x_{m_j+1},\dots,x_k)(\exists y_1,\dots,y_{j-1},y_{j+1},\dots,y_{n-k})R(\bar x, \bar y)$$ 
It's clear, that for independent $a \ne \vec 0$ the set $\{y | S_j(a,y)\}$ is finite, and $\{y | S_j(a,y)\} \subset l(\vec 0,a)\}$, and $R(\bar a,\bar b) \to S_j(a_{m_j},b_j)$ for independent $\bar a$ if $b_j \in l(\vec 0,a_{m_j})$. So $S_j(x,y) \equiv y=\alpha_{j,1} x \lor \dots \lor y=\alpha_{j,n_j} x$ for a finite set $\{\alpha_{j,1},\dots, \alpha_{j,n_j}\}$ of rational numbers. Note that $\vec 0$ is definable by $S_j$. Without loss of generality we can suppose that $R(\bar z)$ is false, if at least one argument is equal to $\vec 0$.

We say that a string $t=\langle \alpha^t_1,\dots,\alpha^t_{n-k} \rangle$ of rational numbers is \emph{regular}, if for some (for any) independent  $\bar a$ holds $R(\bar a, \alpha^t_1 \cdot a_{m_1},\dots,\alpha^t_{n-k} \cdot a_{m_{n-k}})$. Define $T_s(\bar x,\bar y) \leftrightharpoons  \bigvee_t y_1=\alpha^t_1 \cdot x_{m_1}\land \dots \land y_{n-k}=\alpha^t_{n-k} \cdot x_{m_{n-k}}$, the disjunction contains all regular strings $t$. Conditions (i) and (ii) for $T_s$ follows from the definition immediately.

Prove now that $T_s$ is definable by $R$. First for each $j \leqslant n-k$ we define an equivalence $E_j$. Without loss of generality we describe the equivalence $E_1$. Let $m_1,\dots, m_l$ be the list of all numbers $m_i \leqslant n-k$ such that $s_{m_i}=1$. We may suppose that they are $1,\dots,l$. We say that 2 tuples $a,b_1,\dots,b_l$ and $a',b'_1,\dots,b'_l$ are equal ($E_1(a,\bar b,a',\bar b')$ holds) if 
\begin{multline*}
\bigwedge_{i \leqslant l} (S_i(a,b_i) \land S_i(a',b'_i))\\
\land (\exists_{>M}x_2,\dots,x_k)(\forall y_{l+1},\dots,y_{n-k})
 (R(a,x_2,\dots,x_k,b_1,\dots,b_l,y_{l+1},\dots,y_{n-k}) \equiv\\
  R(a',x_2,\dots,x_k,b'_1,\dots,b'_l,y_{l+1},\dots,y_{n-k})) 
\end{multline*}
Note 

(*) that if $\{v,u\}$ are independent, then $E_1(v,p_1 \cdot v,\dots,p_l \cdot v, u, p_1 \cdot v,\dots,p_l \cdot v)$ holds for any rationals $p_i \ne 0$.

Show that $T_s(x_1,\dots,x_k,y_1,\dots,y_{n-k})$ is equivalent to
\begin{multline}
(\exists_{>M}x'_1,\dots,x'_k)(\forall y'_1,\dots,y'_{n-k})\\
(\bigwedge_{i=1}^k E_i(x_i,\bar y_i,x'_i, \bar y'_i) \to R(x'_1,\dots,x'_k,y'_1,\dots,y'_{n-k})) 
\end{multline}

Suppose (2) holds. We need to show, that for independent $\bar a$ holds $R(\bar a, \alpha_1 \cdot a_{m_1},\dots,\alpha_{n-k} \cdot a_{m_{n-k}})$, wher $\alpha_i \cdot x_{m_i} = y_i $. It immediately follows from (2) and note (*).

To the opposite. Choose a regular string $\langle \alpha_1,\dots,\alpha_{n-k} \rangle$ and set $y_i=\alpha_i \cdot x_{m_i}$. We need to show that  (2) holds. By the definition of regular string there is an independent tuple $\bar a$, such that $R(\bar a, \bar b)$ holds where $b_i=\alpha_i \cdot a_{m_i}$. Select a tuple $\bar a'$, such that $\bar a \cup \bar a' \cup \bar x$ is independent. Prove that if $E_i(x_{m_i},\bar y_i,a'_{m_i},\bar  b_i)$ holds for all $i$ then $R(\bar a', \bar b')$ holds.  $E_1(a_1,b_1,\dots,b_l,a'_1,b'_1,\dots,b_l)$ holds because $E_1(x_1,y_1,\dots,y_l,a'_1,b'_1,\dots,b'_l)$ and $E_1(x_1,y_1,\dots,y_l,a_1,b_1,\dots,b_l)$. By the definition of $E_1$ follows $R(a'_1,a_2,\dots,a_k,b'_1,\dots, b'_l,b_2,\dots,b_{n-k}) \equiv R(\bar a, \bar b)$. Continuing this procedure we get $R(\bar a',\bar b')$. 
\end{proof}


\begin{lemma} \label{4-2}
If the table of relation $R(x_1,\dots,x_n,z)$ contains a line $\sum_{i=1}^{n} p_i x_i =z$ where $p_1,p_2 \ne 0$ then there is definable by $R$ 2-undefinable triadic relation.
\end{lemma}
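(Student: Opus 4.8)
The plan is to compose two copies of $R$ so that the surplus arguments $x_3,\dots,x_n$ cancel in the output while genuine dependence on $x_1,x_2$ survives. If $n=2$ then $R$ is already triadic with a table line $z=p_1x_1+p_2x_2$, $p_1,p_2\ne0$, which survives normalization, so $R$ itself is the desired relation; assume $n\ge3$. I would fix a rational $\mu\notin\{0,-p_1^2/p_2\}$ (possible since $p_2\ne0$) and, using the definable scalings $x\mapsto\mu x$ and $x\mapsto-p_1x$, set
\[
S(x_1,x_2,z)\rightleftharpoons(\exists_{>M}a_3,\dots,a_n)(\exists g)\bigl(R(x_1,x_2,a_3,\dots,a_n,g)\wedge R(g,\mu x_1,-p_1a_3,\dots,-p_1a_n,z)\bigr).
\]
On the component where both occurrences of $R$ fire on the given table line one computes
\[
g=p_1x_1+p_2x_2+\sum_{i\ge3}p_ia_i,\qquad z=p_1g+\mu p_2x_1-p_1\sum_{i\ge3}p_ia_i=(p_1^2+\mu p_2)x_1+p_1p_2x_2,
\]
so this value of $z$ does not depend on $a_3,\dots,a_n$.

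Next I would establish finiteness of the fibre. The key observation is that the input tuple $(g,\mu x_1,-p_1a_3,\dots,-p_1a_n)$ of the second occurrence spans the same subspace as $(x_1,x_2,a_3,\dots,a_n)$ — one recovers $x_2=(g-p_1x_1-\sum_{i\ge3}p_ia_i)/p_2$ using $p_2\ne0$ — hence it is independent exactly when the latter is. Therefore, choosing $a_3,\dots,a_n$ generic (large, so $m$-independent in the sense of Note \ref{note2}) both occurrences obey their tables, and the matched component yields the fixed value $z_0=(p_1^2+\mu p_2)x_1+p_1p_2x_2$ for all such tuples; thus the prefix $\exists_{>M}$ is satisfied at $z=z_0$. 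Conversely, for any $z$ not produced by an $a$-independent combination of table lines the inner formula depends nontrivially on the $a_i$ and, by Corollary \ref{note3}, can hold for at most boundedly many tuples $a_3,\dots,a_n$; taking $M$ past that bound filters these values out. Consequently $\{z:S(x_1,x_2,z)\}$ is finite for independent $x_1,x_2$ and contains $z_0$, so the table of $S$ contains the line $z=(p_1^2+\mu p_2)x_1+p_1p_2x_2$.

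Finally I would deduce $2$-undefinability. Since $p_1p_2\ne0$ and $p_1^2+\mu p_2\ne0$, this line survives normalization, so the triadic, $R$-definable relation $S$ has nonempty table. To turn a nonempty table into $2$-undefinability I would argue that a relation with finite $z$-fibres and a normalized line with both coefficients nonzero cannot be invariant under $LGL(\mathcal{M})$: here $z_0\notin l(\vec0,x_1)\cup l(\vec0,x_2)$, and the permutation in $LGL(\mathcal{M})$ that fixes $x_1,x_2$ and multiplies the line $l(\vec0,z_0)$ by an arbitrary rational $c$ would force every $cz_0$ into the finite fibre over $(x_1,x_2)$, a contradiction. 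The main obstacle is exactly this finiteness of the fibre: the naive existential composition $(\exists a_3,\dots,a_n)(\exists g)(\dots)$ has infinite $z$-fibres because the two copies of $R$ may fire on mismatched table lines, and it is the $\exists_{>M}$ over the shared auxiliaries — the same device that produces the simplification $R^*_M$ — that discards the mismatched, $a$-dependent contributions and isolates the matched line with two nonzero coefficients.
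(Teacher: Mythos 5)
Your construction of $S$ is not a formula over the signature $\{R\}$. The terms $\mu x_1$ and $-p_1a_3,\dots,-p_1a_n$ presuppose that the dyadic relations $y=\mu x$ and $y=-p_1x$ are definable by $R$ (you call them ``definable scalings'' without justification), but in this lemma ``definable by $R$'' means definable in $\langle\mathcal{M},\{R\}\rangle$ alone --- the whole point of Statement \ref{s4-1} is to pass from $R$ itself to a 2-undefinable triadic relation. Those scalings are in general not available. Concretely, take $R(x_1,x_2,x_3,z)\equiv z=x_1+x_2-x_3$, which satisfies the hypothesis with $p_1=p_2=1$. Every translation $x\mapsto x+v$ preserves this $R$, hence preserves every relation definable by $R$; but $y=\mu x$ (for $\mu\ne 1$) and $y=-x$ are not translation-invariant, so they are not definable by $R$. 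The gap cannot be patched by ``approximate'' scalings with finite fibres (in the spirit of Lemma \ref{add-1} or Step \ref{fin1}) either: if $W$ is translation-invariant and $W(a,\mu a)$ holds for all $a$ with $\mu\ne 1$, then the fibre over any point contains the whole coset $\{\mu a+(1-\mu)v \mid v\in\mathcal{M}\}$ and is infinite. Note that even taking $\mu=1$ does not save you, because the scaling of the auxiliaries by $-p_1$ is exactly what your cancellation scheme rests on. So for affine $R$ --- precisely the case that produces the midpoint reduct --- your $S$ does not exist, while the lemma's conclusion still holds (here $R(z,z,x_1,x_2)$ defines $z=(x_1+x_2)/2$).

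The paper achieves the cancellation you are after without any scaling: it first composes $R$ with itself (feeding the output of one copy into the first argument of another) to get $R'$, and then takes \emph{two copies of the same relation} $R'$ on the \emph{same} auxiliary variables $\Delta,x_3,\dots,x_n$, one evaluated at $(x_1,x_2,\dots)$ and one at $(z,z,\dots)$, equating their outputs $w$. Since both copies carry identical coefficients on the shared auxiliaries, those terms cancel automatically, leaving $(p_1^2+p_2)z=p_1^2x_1+p_1p_2x_2$ up to finitely many spurious lines removed by the $\exists_{>M}$ filter; this is also why Lemma \ref{add-1} is invoked to arrange $p_1^2+p_2\ne 0$, the role your free choice of $\mu$ was intended to play. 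Your $\exists_{>M}$ filtering and your $LGL(\mathcal{M})$ argument for 2-undefinability are sound, but they sit on top of a defining formula you are not entitled to write down.
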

\begin{proof}

Due to lemma \ref{add-1} we suppose, that $p_1^2+p_2 \ne 0$.

Consider a relation $R'(x_1,\Delta,x_3,\dots,x_n,z) \rightleftharpoons (\exists v)(R(x_1,\Delta,x_3,\dots,x_n,v) \land R(v,x_2,\dots,x_n,z))$ which contains for independent $\{\Delta,x_1,x_3,\dots,x_n\}$ and $\{\Delta,x_2,\dots,x_n\}$ the line $z=p_1^2 x_1+p_1p_2\Delta+p_2 x_2+p_3(1+p_1)x_3+\dots+p_n(1+p_1)x_n$. So a relation $R^*(x_1,x_2,z) \leftrightharpoons  (\exists_{>M}\Delta,x_3,\dots,x_n)(\exists w)(R'(\Delta,x_1,\dots,x_n,w) \land R'(\Delta,z,z,x_3,\dots,x_n,w))$ holds for independent $\{x_1,x_2\}$ when $(p^2+q)z=p^2 x_1+p_2 x_2$ and there are a finite number of such $z$ that  $R^*(x_1,x_2,z)$ holds.
\end{proof}

So if by 2-undefinable relation $R$ an affine ternary relation is definable, then (according to lemma \ref{main-1}) the automorphism group of $R$ is a subgroup of $AGL(\mathcal{M})$ and coincide with this group if $\vec 0$ is not definable  by $R$, otherwise it coincide with $GL(\mathcal{M})$.

If by 2-undefinable relation $R$ a nonaffine ternary relation is definable, then the relation $z=\pm x \pm y$ is definable as well (lemma \ref{fin}). We are going to prove, that if the group $GL^\pm(\mathcal{M})$ does not preserves $R$ then or $z=x+y$ is definable by$R$ or $R$ is equivalent to signature $\{z=\pm x \pm y, R^*(\bar x)\}$ for some 2-definable relation $R^*$. 

We start with 

\begin{lemma} \label{4-2-1}
Suppose that for a relation $R(\bar x,z)$, a tuple $\bar r \in \mathbb Q, r_i \ne 0$  and for any independent tuple $\bar a \in  \mathcal{M}$ holds

$R(\bar a,z) \Leftrightarrow z=r_1 a_1 \pm r_2 x_2 \dots \pm r_n a_n$

Then $z=x+y$ is definable by $R$.
\end{lemma}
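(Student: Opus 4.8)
The plan is to argue through the Galois correspondence between definable reducts and permutation groups recalled in the Preliminaries: since the group of permutations preserving $z=x+y$ is exactly $GL(\mathcal{M})$, it suffices to show that the group $H$ of all permutations of $\mathcal{M}$ preserving $R$ is contained in $GL(\mathcal{M})$. I would first record two containments. On one hand $GL(\mathcal{M})\subseteq H$, because the table of $R$ consists of the linear equations $z=r_1x_1\pm r_2x_2\pm\dots\pm r_nx_n$, and every linear map preserves these equations together with linear independence. On the other hand $H\subseteq GL^\pm(\mathcal{M})$: the table of $R$ contains a line with two nonzero coefficients, so by Lemma \ref{4-2} and Lemma \ref{fin} the relation $z=\pm x\pm y$ is definable by $R$, whence $H$ lies in its automorphism group $GL^\pm(\mathcal{M})$ (Statement \ref{+-}).

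Next I would reduce the problem to sign maps. Using $GL^\pm(\mathcal{M})=GL(\mathcal{M})\cdot S^\pm(\mathcal{M})$, as extracted in the proof of Statement \ref{+-}, write a given $\sigma\in H$ as $\sigma=g\circ s$ with $g\in GL(\mathcal{M})$ and $s\in S^\pm(\mathcal{M})$. Since $g\in GL(\mathcal{M})\subseteq H$, also $s=g^{-1}\circ\sigma\in H$. Hence it is enough to prove that the only elements of $H\cap S^\pm(\mathcal{M})$ are $\pm\,\mathrm{id}$: then $s\in\{\pm\,\mathrm{id}\}\subseteq GL(\mathcal{M})$, so $\sigma=g\circ s\in GL(\mathcal{M})$, giving $H=GL(\mathcal{M})$ and the conclusion.

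The heart of the proof is then a short computation. Let $s(v)=\varepsilon(v)\,v$ with $\varepsilon(v)\in\{-1,1\}$ preserve $R$. Fix an independent tuple $\bar a$ and let $Z(\bar a)=\{\,r_1a_1+\sum_{i\ge2}\epsilon_i r_ia_i : \epsilon\in\{-1,1\}^{n-1}\,\}$ be the solution set of $R(\bar a,z)$; by independence of $\bar a$ and $r_i\ne0$ its $2^{n-1}$ elements are distinct. Preservation of $R$ means $s(Z(\bar a))$ is the solution set of $R(sa_1,\dots,sa_n,z)$, and since $sa_i=\pm a_i$ this set equals $\{\,\varepsilon(a_1)r_1a_1+\sum_{i\ge2}\epsilon_i r_ia_i\,\}$, the signs $\varepsilon(a_i)$ for $i\ge2$ being absorbed into the free $\epsilon_i$. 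Comparing the coefficient of $a_1$, which is the unambiguous $+r_1$ in every element of $Z(\bar a)$ and the fixed $\varepsilon(a_1)r_1$ in every element of the image, linear independence forces $\varepsilon(w)=\varepsilon(a_1)$ for every $w\in Z(\bar a)$; in particular $\varepsilon(r_1a_1+\sum_{i\ge2}r_ia_i)=\varepsilon(a_1)$. Letting $\bar a$ vary (for instance moving $a_2$ while keeping the other entries fixed and independent) this value is attained on every line independent from $a_1$, and a second application covers $a_1$'s own line, so $\varepsilon$ is constant and $s=\pm\,\mathrm{id}$.

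The main obstacle, and the only point where the hypothesis is genuinely used, is this coefficient comparison: it is precisely because $x_1$ carries a fixed sign, i.e. coefficient exactly $+r_1$ with no $\pm$, that one can pin down $\varepsilon(w)=\varepsilon(a_1)$. Had all coefficients been free, as in $z=\pm x\pm y$, the leading sign could be absorbed and $\varepsilon$ would remain unconstrained, in agreement with $GL^\pm(\mathcal{M})$ preserving that symmetric relation. Everything else is routine bookkeeping with linear independence and the already-established structure of $GL^\pm(\mathcal{M})$.
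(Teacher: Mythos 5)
Your proposal is essentially correct, but it takes a genuinely different route from the paper. The paper's proof is an explicit formula construction entirely inside the definability framework: it first forms $R_1(x,y,z)\rightleftharpoons(\exists x_2,\dots,x_n)(y=\pm r_2x_2\dots\pm r_nx_n\land z=\pm r_1x\pm y\land R(x,x_2,\dots,x_n,z))$, which for independent $x,y$ is equivalent to $z=r_1x\pm y$ --- the only place where the fixed sign of $r_1$ enters, i.e.\ exactly the asymmetry your coefficient comparison exploits --- then cancels the residual sign by passing through a generic $\Delta$ (the relation $R_2$ and the quantifier $(\exists_{>M}\Delta)$) to obtain $z=x\pm y$, and finally pins down $z=x+y$ as the common solution of $z=x\pm y$ and $z=y\pm x$, with the degenerate cases ($x=y$, $x=\vec 0$, $y=\vec 0$) handled explicitly. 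You instead compute the automorphism group $H$ of $R$: the sandwich $GL(\mathcal{M})\subseteq H\subseteq GL^\pm(\mathcal{M})=GL(\mathcal{M})\cdot S^\pm(\mathcal{M})$, the reduction to sign maps, and the coefficient-of-$a_1$ argument showing $H\cap S^\pm(\mathcal{M})=\{\pm\,\mathrm{id}\}$, whence $H=GL(\mathcal{M})$. Your sign computation is sound and is the exact group-theoretic shadow of the paper's $R_1$ step. What your route buys: it is shorter, and it proves something formally stronger (the automorphism group of $R$ equals $GL(\mathcal{M})$, so $R$ is equivalent to $z=x+y$ as a reduct). What it costs: it is non-constructive, and the final step needs the direction "group containment implies definability" of the Galois correspondence, which in this non-$\omega$-categorical setting is a nontrivial background fact that the paper only asserts in the Preliminaries --- though the paper itself uses it in the same way (Corollaries \ref{seq1} and \ref{3-1}), so this is fair within its framework.

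Two caveats. First, your justification of $GL(\mathcal{M})\subseteq H$ ("the table consists of linear equations") is not quite the right reason, since the hypothesis of the lemma constrains $R$ only on independent tuples; the containment holds simply because $R$ is a definable relation and is therefore preserved by all automorphisms of $\langle\mathcal{M},\{+\}\rangle$, i.e.\ by $GL(\mathcal{M})$. Second, your containment $H\subseteq GL^\pm(\mathcal{M})$ needs $z=\pm x\pm y$ to be definable by $R$, and the citation "Lemma \ref{4-2} plus Lemma \ref{fin}" has a gap: Lemma \ref{fin} applies to simple relations, and producing a simple relation from the triadic relation of Lemma \ref{4-2} goes through Lemmas \ref{prim} and \ref{add-1}, which require nonaffineness. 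This is repairable: the table of $R$ contains all sign variants of $\pm r_2,\dots,\pm r_n$, so its lines cannot all have coefficient sum $1$; and even in the residual case where the produced triadic relation were affine, Lemma \ref{main-1} would give $H\subseteq AGL(\mathcal{M})$, and since no nontrivial translation preserves $R$, one again gets $H=GL(\mathcal{M})$. Moreover you are in good company here: the paper's own proof opens with the same unproved assertion that $z=\pm x\pm y$ is definable by $R$ ("and hence by $R$"), the intended reading being that in the lemma's actual application (Lemma \ref{4-3}) the relation $z=\pm x\pm y$ is part of the signature.
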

\begin{proof}
Let us remind that the relations $y=-x$ and $y=\pm r x$ for any $r \in \mathbb Q$ are definable by $z=\pm x \pm y$ and hence by $R$.

Define the relation $R_1(x,y,z) \rightleftharpoons (\exists x_2,\dots,x_n)(y=\pm r_2 x_2 \dots \pm r_n x_n \land z=\pm r_1 x \pm y \land R(x, x_2,\dots,x_n,z))$ and note that for independent $x,y$ holds $R_1(x,y,z) \Leftrightarrow z=r_1 x \pm y$. Next define $R_2(x,y,\Delta,z) \rightleftharpoons (\exists u,v) (R_1(z,\Delta,u) \land R_1(y,\Delta,v) \land R_1(x,v,u))$. If $\Delta \not \in \mathcal V(\{x, y, z\})$ then $R_2(x,y,\Delta,z) \Leftrightarrow r_1 z \pm \Delta = r_1 x \pm r_1 y \pm \Delta$ hence $(\exists_{>M} \Delta) R_2(x,y,\Delta,z) \Leftrightarrow z=x \pm y$ for sufficiently large $M$ and $x,y,z \ne 0$. Therefore $z=x+y \Leftrightarrow ((x \ne y \land x \ne \vec 0 \land y \ne 0 \to (z = x \pm y \land z = y \pm x)) \land (x=y \land x \ne \vec 0 \to (z=x \pm x \land z \ne \vec 0)) \land (x= \vec 0 \to z=y) \land (y= \vec 0 \to z=x))$
\end{proof}

\begin{lemma} \label{4-3}

Suppose that for a relation $R(\bar t, x_1,\dots,x_n,z)$ where $n>1$, some $m>0$, any tuple $\bar r \in \mathbb Q, r_i  \ne 0$, any parameters $\bar t \in \mathcal{M}$, and for any $m$-independent tuple $\bar a \in  \mathcal{M}$ holds

$R(\bar t,\bar a,z) \Leftrightarrow \bigvee_{s \in S_{\bar t}} z=\sum_{i=1}^n s(i)r_i a_i $ where $S_{\bar t} \subset \{-1,1\}^n$

and $0 < |S_{\bar t}| < 2^n$ for some $\bar t \in \mathcal{M}$.

Then $z=x+y$ is definable by $\Sigma=\{z=\pm x \pm y, R\}$.
\end{lemma}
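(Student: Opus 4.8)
The plan is to move to the automorphism side. By the Galois correspondence recalled in the Preliminaries, $z=x+y$ is definable by $\Sigma=\{z=\pm x\pm y,R\}$ iff every permutation of $\mathcal M$ preserving $\Sigma$ lies in $\mathrm{Aut}(z=x+y)=GL(\mathcal M)$ (we use that $\vec 0$ is a constant and that over $\mathbb Q$ additive bijections are linear). A permutation preserving $z=\pm x\pm y$ lies in $GL^\pm(\mathcal M)$ by Statement \ref{+-}; writing $GL^\pm(\mathcal M)=GL(\mathcal M)\cdot S^\pm(\mathcal M)$ (a product of groups, since $GL$ normalises $S^\pm$), such a $\sigma$ fails to lie in $GL(\mathcal M)$ exactly when its even-sign-function part is non-constant. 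Since every element of $GL^\pm(\mathcal M)$ is odd and preserves $z=\pm x\pm y$, this is equivalent to $\sigma$ not preserving $z=x+y$, i.e. to the existence of $a,b$ with $\sigma(a+b)=\sigma(a)-\sigma(b)$. So it suffices to show: if $\sigma\in GL^\pm(\mathcal M)$ preserves $R$, then $\sigma$ preserves $z=x+y$.

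The combinatorial input is a \emph{boundary edge}. Because $\{-1,1\}^n$ is connected under single-coordinate flips and $0<|S_{\bar t_0}|<2^n$ for some parameter $\bar t_0$, there are a coordinate $j$ and a pattern $s^\ast\in S_{\bar t_0}$ whose $j$-th flip $s^\dagger=\mathrm{flip}_j(s^\ast)$ lies outside $S_{\bar t_0}$. I also record two free symmetries: for $m$-independent tuples the values $\sum_i s(i)r_ia_i$ are pairwise distinct (Note \ref{note2}), so the sign-set description is faithful and Boolean combinations of $R$ with its (definable, via $y=-x$) coordinate flips realise the corresponding set operations on the $S_{\bar t}$; and since $-\mathrm{id}\in GL(\mathcal M)$ preserves $R$, one has $S_{-\bar t}=S_{\bar t}$.

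I would then derive a contradiction from a hypothetical non-linear $\sigma$. Reducing to the sign-function case (its linear part $\tau$ only conjugates $R$ to another relation of the same $GL$-orbit, for which the statement is the same), take $\sigma\in S^\pm(\mathcal M)$ a non-constant even sign function, so an involution; pick independent $a,b$ fixed and negated by $\sigma$ respectively. Choosing a good parameter $\bar t_0$ on which $\sigma$ acts by a global sign, place $b$ in the boundary coordinate $j$ and $\sigma$-fixed vectors in the other slots, forming an $m$-independent tuple $\bar c$ with $z=\sum_i s^\ast(i)r_ic_i$ and $\sigma(z)=z$. Then $R(\bar t_0,\bar c,z)$ holds through the pattern $s^\ast\in S_{\bar t_0}$, while the image $R(\bar t_0,\sigma\bar c,\sigma z)$ realises, relative to the still $m$-independent tuple $\sigma\bar c$, the forbidden pattern $s^\dagger\notin S_{\bar t_0}$; this contradicts the defining equivalence for $R$. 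Hence no such $\sigma$ exists and $z=x+y$ is definable.

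The step I expect to be the main obstacle is controlling $\sigma$ on the \emph{parameter} $\bar t_0$: acting entrywise by signs, $\sigma$ may send $\bar t_0$ to a tuple whose sign set $S_{\sigma\bar t_0}$ is a priori unrelated to $S_{\bar t_0}$. This is to be handled by keeping $\bar t_0$ inside the definable set $\Theta=\{\bar t:0<|S_{\bar t}|<2^n\}$ — definable by the counting criterion of Corollary \ref{note3}, hence $\sigma$-invariant — and by combining $S_{-\bar t}=S_{\bar t}$ with $\omega$-saturation to locate a good $\bar t_0$ on which $\sigma$ acts by a \emph{global} sign, so that $S_{\sigma\bar t_0}=S_{\bar t_0}$ and the configuration above applies. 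I note the parallel explicit route, which avoids the automorphism language when the sign set has a \emph{pure} coordinate (all patterns sharing its sign): unioning $R$ over the definable flips $F_i$ of the remaining coordinates then yields a relation with sign set the half-space $\{s:s(j)=+1\}$, to which Lemma \ref{4-2-1} applies directly. This route stalls precisely on the globally balanced sets such as $z=\pm(r_1a_1+\cdots+r_na_n)$, where no coordinate is pure and Boolean combinations with flips only reproduce $\{\emptyset,S,S^{c},\{-1,1\}^{n}\}$; it is for these cases that the uniform automorphism computation above is indispensable.
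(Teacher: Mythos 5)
Your route is genuinely different from the paper's: the paper stays entirely on the syntactic side (induction on $n$, the six types $T_R$, reduction to Lemma \ref{4-2-1}), while you move to the group side via the correspondence stated in the Preliminaries. The reduction itself is sound within the paper's framework: $\mathrm{Aut}(z=x+y)=GL(\mathcal M)$, $\mathrm{Aut}(z=\pm x\pm y)=GL^\pm(\mathcal M)$ by Statement \ref{+-}, and since $R$ is $0$-definable in $\langle\mathcal M,+\rangle$ it is automatically $GL(\mathcal M)$-invariant, so the lemma is indeed equivalent to: no non-constant sign function $\sigma_\epsilon\in S^\pm(\mathcal M)$ (necessarily even, $\epsilon(-x)=\epsilon(x)$, by bijectivity) preserves $R$.

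That last claim, however, is exactly where your proof stops being a proof. What preservation of $R$ gives is: for every linearly independent $\bar a$ and every $s\in S_{\bar t}$, writing $z_s=\sum_i s(i)r_ia_i$, the pattern $\bigl(s(i)\,\epsilon(a_i)\,\epsilon(z_s)\bigr)_i$ lies in $S_{\sigma\bar t}$, and in fact the image of $S_{\bar t}$ under this pattern map equals $S_{\sigma\bar t}$ for \emph{every} such $\bar a$. Your plan for handling the unknown set $S_{\sigma\bar t}$ --- find $\bar t_0\in\Theta$ on which $\sigma$ acts by a global sign --- is unsupported by the tools you cite: $\sigma$-invariance of $\Theta$ says nothing about how $\sigma$ acts coordinatewise on any member of $\Theta$, the identity $S_{-\bar t}=S_{\bar t}$ helps only after such a $\bar t_0$ is found, and $\omega$-saturation is inapplicable because ``$\epsilon$ is constant on the coordinates of $\bar t_0$'' is not a first-order condition in any structure at hand ($\epsilon$ is an arbitrary, non-definable function). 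Producing such a $\bar t_0$ is itself a nontrivial partition-regularity statement (every even $2$-coloring of the lines of $\mathcal M$ contains a monochromatic realization of the linear-relation pattern of some $\bar t\in\Theta$), and nothing in the proposal proves it. Moreover this detour is a red herring: since $S_{\sigma\bar t}$ is one fixed set independent of $\bar a$, it would suffice to show that for non-constant even $\epsilon$ and proper nonempty $S_{\bar t}$ the image set $\{(s(i)\epsilon(a_i)\epsilon(z_s))_i : s\in S_{\bar t}\}$ takes at least two distinct values as $\bar a$ varies. That finite but genuine combinatorial lemma --- a case analysis over all possible colorings $\epsilon$ and sets $S_{\bar t}$ --- is the real content of the theorem on your route, and it is absent.

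Your concrete ``boundary edge'' configuration also fails as stated for adversarial $\epsilon$. Take $\epsilon\equiv+1$ exactly on the lines of one fixed $2$-dimensional subspace $P_0$ and $-1$ elsewhere, with $n\ge 4$: there do not exist $n-1$ pairwise independent $\sigma$-fixed vectors at all, so the slots cannot be filled. Replacing $\epsilon$ by $-\epsilon$ (composing with $-\mathrm{id}\in GL(\mathcal M)$) repairs this example, but then the colors of the determined vector $z=\sum_i s^*(i)r_ic_i$ and of any slot you solve for are dictated by $\epsilon$ rather than chosen by you, and no argument covering all even colorings is given. The paper's syntactic induction never has to reason about arbitrary sign functions, which is precisely what it buys by staying on the definability side.
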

\begin{proof}

Note, that $m$-independency is definable by $z=\pm x \pm y$.

We may suppose that $(\forall \bar t)((\exists z) (R(\bar t,\bar a,z) \to |S_{\bar t}| = k))$ holds for any $m$-independent $\bar a$ and some $0 < k < 2^n$, otherwise we consider the relation $R(\bar t,\bar x,z) \land |S_{\bar t}| = k$ for an appropriate $k$.

For any  $m$-independent $\bar x$ and $z$ we denote by $W_{\bar x,z}$ the set $\{z' | (\exists u)(u=\pm r_1 x_1 \dots \pm r_{n-1} x_{n-1} \land z=\pm u \pm r_n x_n \land z'= \pm u \pm r_n x_n)\}$. If $R(\bar t,\bar x, z)$ holds, then
$z' \in W_{\bar x,z} \Leftrightarrow (z=\sum_{i=1}^n s(i)r_i a_i , z'=\pm \sum_{i=1}^n s(i)r_i a_i \pm r_n x_n$ for some $s \in S_{\bar t}$). Note, that $z \in W_{\bar x,z}$ and $|W_{\bar x,z}| = 4$. By $W'_{\bar t, \bar x,z}$ we denote $\{z' | R(\bar t, \bar x,z'), z' \in W_{\bar x,z}\}$.

Now for any $m$-independent $\bar x$ and $z$ such that $R(\bar t, \bar a, z)$ we define the \emph {type} ($T(\bar t, \bar a, z)$) -- a natural number $i, i < 7$ , the relation $T(\bar t, \bar x, z)=i$ will be definable by $\Sigma$ for each $i$.

(i) $T(\bar t, \bar x, z)=1 \rightleftharpoons |W'_{\bar t,\bar x,z}|=4$

(ii) $T(\bar t, \bar x, z)=2 \rightleftharpoons |W'_{\bar t, \bar x,z}|=1$

(iii) $T(\bar t, \bar x, z)=3 \rightleftharpoons |W'_{\bar t, \bar x,z}|=3$

(iv) $T(\bar t, \bar x, z)=4 \rightleftharpoons (|W'_{\bar t, \bar x,z}|=2 \land R(\bar t, \bar x,-z))$

(v) $T(\bar t, \bar x, z)=5 \rightleftharpoons (|W'_{\bar t, \bar x,z}|=2 \land \lnot R(\bar t, \bar x,-z) \land R(\bar t, x_1,\dots,x_{n-1}, -x_n,z))$

(vi) $T(\bar t, \bar x, z)=6 \rightleftharpoons (|W'_{\bar t, \bar x,z}|=2 \land  \lnot R(\bar t, \bar x,-z) \land \lnot R(\bar t, x_1,\dots,x_{n-1}, -x_n,z))$

By $T_R$ we denote the minimal $i$ such that $(\exists \bar t, \bar x, z)((\{x_1,\dots,x_n\} \text{ is m-independent }) \land R(\bar t,\bar x,z) \land T(\bar t,\bar x, z)=i)$ and denote $R_1(\bar t, \bar x,z) \rightleftharpoons R(\bar t,\bar x,z) \land T(\bar t,\bar x, z)=T_R$. Note that for some $S_{1,\bar t} \subset S_{\bar t}, S_{1,\bar t} \ne \varnothing$ and any $m$-independent $\bar a$ holds

$R_1(\bar t,\bar a,z) \Leftrightarrow \bigvee_{s \in S_{1,\bar t}} z=\sum_{i=1}^n s(i)r_i a_i $

For a function $s \in \{-1,1\}^k, \alpha= \pm 1$ we denote by $\langle s, \alpha \rangle \in \{-1,1\}^{k+1}$ extension $s$ on $\{1,\dots,k+1\}$ such that $\langle s, \alpha \rangle(i)=s(i), i < k+1, \langle s, \alpha \rangle(k+1)=\alpha$ by $s^*$ we denote  the initial segment of $s$ i.e. $s=\langle s^*, s(k) \rangle$. If $S \subset \{-1,1\}^k$ then $S^*=\{s^* | s \in S\}$. 

Proof by induction on $n$.

The basis of the induction, case n=2, will be considered later.

For $n>2$ we will consider all 6 cases:

(i) $T_R=1$: if $s \in S_{1,\bar t}^*$ then all 4 strings $\langle \pm s, \pm 1 \rangle$ belong to $S_{1,\bar t}$ as well.
Define the relation $R_2(\bar t,x_1,\dots,x_{n-1},z) \rightleftharpoons z= \sum_{i=1}^{n-1} \pm r_i a_i \land (\exists x_n, z')((\{x_1,\dots,x_n\} \text{ is m-independent }) \land R_1(\bar t, \bar x,z') \land z'=\pm z \pm r_n a_n))$. For $m$-independent $a_1,\dots, a_{n-1}$ holds $R_2(\bar t,\bar a,z) \Leftrightarrow (z= \sum_{i=1}^{n-1} s(i)r_i a_i$ for some $s \in S^*_{1,\bar t}$). Because $|S^*_{1,\bar t}| = |S_{1,\bar t}| / 2$ we can use induction.

(ii) $T_R=2$: if $s \in S_{1,\bar t}^*$ then $-s \not \in S_{1,\bar t}^*$ and $\langle s, 1 \rangle \in S_{1,\bar t} \Leftrightarrow \langle s, -1 \rangle \not \in S_{1,\bar t}$. Consider the relation $R_2(\bar t, \bar x,z,u) \rightleftharpoons R_1(\bar t,\bar x,z) \land u=\sum_{i=1}^{n-1}\pm r_i(1+r_n) x_i \pm r_n^2 x_n \land R_1(\bar t,x_1,\dots,x_{n-1},z,u)$. Let a tuple $\bar a$ be $m$-independent, $u=\sum_{i=1}^{n-1} s_1(i)r_i a_i + s_1(n) r_n (\sum_{i=1}^{n} s_2(i)r_i a_i)$ for some $s_1, s_2 \in S_1$. Then $u=\sum_{i=1}^{n-1}\pm r_i(1+r_n) x_i \pm r_n^2 x_n$ if $s_1^*=s_2^*$ and $s_1(n)=1$ or $s_1^*=-s_2^*$ and $s_1(n)=-1$. From $s \in S_{1,\bar t}^* \to -s \not \in S_{1,\bar t}^*$ follows that $(\exists u)R_2(\bar t,\bar x,z,u) \Rightarrow z=\sum_{i=1}^{n-1} s(i)r_i x_i + r_n x_n$ for $m$-independent $\bar x$. If the relation $(\exists u)R_2(\bar t,x,z,u)$ is nonempty for some parameters $\bar t$ and $m$-independent $\bar x$ then we can use the Lemma \ref{4-2-1} renaming $x_n$ to $x$. If $(\exists u)R_2(\bar t,x,y,z)$ is empty for any parameters $\bar t$ then $s(n)=-1$ for any $s \in S_{1,\bar t}$, hence we can use Lemma \ref{4-2-1} for the relation $R_1$.
 
(iii) $T_R=3$: if $s \in S^*_{1,\bar t}$ then there is only one item $s'=\langle \pm s, \pm 1 \rangle, s' \not \in S_{1,\bar t}$. Define $R_3(\bar t,\bar x,z) \rightleftharpoons (\exists z')(R(\bar t,\bar x,z') \land z \in W_{\bar x,z'} \setminus W'_{\bar t,\bar x,z'})$. The relation $R_3$ meets condition of the case (ii).

(iv) $T_R=4$: $\langle s,1 \rangle \in S_{1,\bar t} \Leftrightarrow \langle -s,-1 \rangle \in S_{1,\bar t}$. Consider the relation $R_2(\bar t,\bar x,z,u)$ from the case (ii). Just as in that case we see, that $(\exists u)R_2(\bar t,\bar x,z,u) \Rightarrow z=\sum_{i=1}^{n-1} \pm r_i x_i + r_n x_n$ for $m$-independent $\bar x$, so we can use the Lemma \ref{4-2-1} for the relation $(\exists u)R_2(\bar t,\bar x,z,u)$.

(v) $T_R=5$: if $s \in S^*_{1,\bar t}$ then $\langle s,1\rangle, \langle s,-1 \rangle \in S_{1,\bar t}, -s \not \in S^*_{1,\bar t}$. Consider the relation $R_3(x_1,\dots,x_{n-1},z,x_n)$ and note that in this case it meets conditions of case (iv).

(vi) $T_R=6$: if $s \in S^*_{1,\bar t}$ then $-s \in S^*_{1,\bar t}$ and $\langle s,1 \rangle \in S^{1,\bar t} \Leftrightarrow \langle -s,1 \rangle \in S_{1,\bar t}$.  Consider the relation $R_2(\bar t,\bar x,z,u)$ from the case (ii). It is easy to check that in this case if $R_2(\bar t,\bar x,z,u)$ holds then $u=\sum_{i=1}^{n-1}\pm r_i(1+r_n) x_i + r_n^2 x_n$ so so we can use the Lemma \ref{4-2-1} for the relation $R_3(\bar t,\bar x,u) \rightleftharpoons (\exists z)R_2(\bar t, \bar x,z,u)$.

Basis of the induction: $n=2$. The same as $n>2$ except the case (i) because $T_R < 4$.

\end{proof}

\begin{state} \label{4-4} 
Suppose that $R$ in 2-undefinable nonaffine relation. Then or $R$ is equivalent to $z=x+y$ or $R$ is equivalent to $\{z=\pm x \pm y, R^*\}$ for some 2-definable relations $R^*$.
\end{state}
\begin{proof}
We know, that $z=\pm x \pm y$ is definable by $R$, so is the relation $y=-x$ and relations $y=\sum_i \pm r_i x_i$ for any $r_i \in \mathbb{Q}$, so the $m$-independency is definable for any $m$ as well.

Let $m$ be such number that $\lnot R(\bar a)$ for any $m$-independent $\bar a$.

We prove by induction of $n$ -- numbers of arguments of $R$.



Let $s \subset \{1,\dots, n\}, s \ne \varnothing, s \ne \{1,\dots, n\}$ and $p=\{r_{i,j} | i \not \in s, j \in s, r_{i,j}=l/k$ for some $|k|,|l|<m \}$. Denote by $R_{s,p}(\bar x)$ the statement ($\{x_i | i \in s\}$ is m-independent $\land \bigwedge_{i \not \in s}  x_i=\sum_{j \in s} \pm r_{i,j} x_j \land R(\bar x)$). The relation $R_{s,p}(\bar x)$ is definable by $R$ and $R(\bar x) \equiv \bigvee_{s,p} R_{s,p}(\bar x)$. 

We prove the statement for each $R_{s,p}(\bar x)$, without loss of generality we suppose that $s=\{1,\dots,k\}$.

Consider different cases:

(i) For any $i > k$ there is only one $l_i \leqslant k$ such that $r_{i,l_i} \ne 0$. In other words $R_{s,p}(\bar x)$ is equivalent to ($\{x_1,\dots,x_k\}$ is m-independent $\land \bigwedge_{i > k}  x_i= \pm r_{l_i} x_{l_i} \land R(\bar x)$). Then $R_{s,p}(\bar x)$ is equivalent to ($\{x_1,\dots,x_k\}$ is m-independent $\land \bigvee_{\bar p \in A} \bigwedge_{i > k}  x_i= p_i  x_{l_i}$) where $A=\{ \bar p | p_i= \pm r_{l_i}, R(a_1,\dots,a_k, p_{k+1, l_{k+1}} x_{l_{k+1}},\dots, p_{n, l_{n}} x_{l_{n}})$ holds for independent $\{a_1,\dots,a_k\}\}$.

(ii) $r_{n,1},\dots,r_{n,m} \ne 0,r_{n,m+1},\dots,r_{n,k} = 0$, where $1<m \leqslant k$.

Consider the relation $R_{s,p}(x_1,\dots,x_k,t_1,\dots,t_{n-k-1},z)$. If for any $m$-independent $\{a_1,\dots,a_k\}$ holds $(\exists \bar t)(0 < |\{z | R_{s,p}(a_1,\dots,a_k,t_1,\dots,t_{n-k-1},z)| < 2^m)$ then, due to Lemma \ref{4-3}, the relation $z=x+y$ is definable by $R_{s,p}$ and hence by $R$.

If for any $m$-independent $\{a_1,\dots,a_k\}$ holds 
$$(\forall \bar t)((\exists z) R_{s,p}(a_1,\dots,a_k,t_1,\dots,t_{n-k-1},z) \to |z | R_{s,p}(a_1,\dots,a_k,t_1,\dots,t_{n-k-1},z)| = 2^m)$$
then 
$$R_{s,p}(x_1,\dots,x_k,t_1,\dots,t_{n-k-1},z) \equiv (\exists z) R_{s,p}(x_1,\dots,x_k,t_1,\dots,t_{n-k-1},z) \land z=\sum_{i=1}^m \pm r_{n,i} x_k$$
so we can use induction for the relation $(\exists z) R_{s,p}(x_1,\dots,x_k,t_1,\dots,t_{n-k-1},z)$
\end{proof}

The group of permutations, preserving all relations of form $\{z=\pm x \pm y, y=p_1 x,\dots,y=p_n x\}$ we denote $LGL^\pm(\mathcal{M})$: beside $GL(\mathcal{M})$ it contains such permutations $\sigma$ that for each line $l$ passing through $\vec 0$ or $\sigma(x)=x, x \in l$ or $\sigma(x)=-x, x \in l$.

\section{Summary.} 


\includegraphics[scale=0.5]{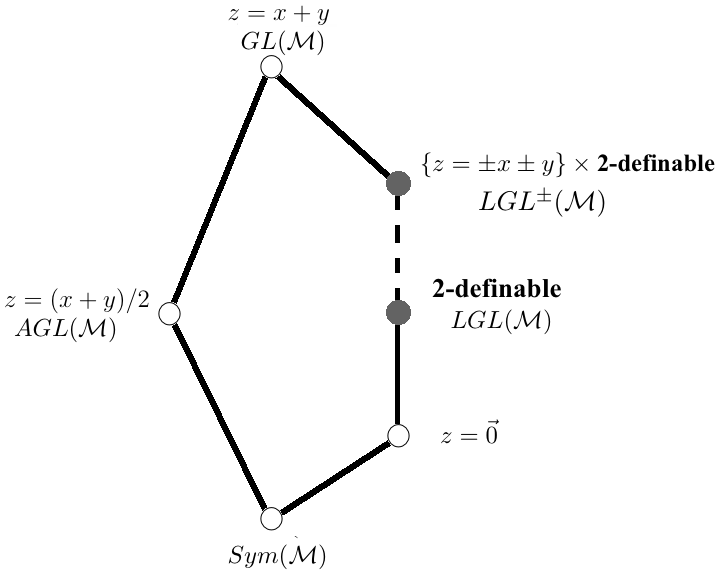}

\emph{Comments:}

(i) Solid gray vertex denotes a family of relations

(ii) Dotted line means that members of one family is defined by members of another one.

\emph{Acknowledgement}. We would like to thank Fedor Yakovlev for useful discussions and Albert Muchnik for the interest to the subject.


\end{document}